\newtheorem{theorem}{Theorem}
\newtheorem{lemma}{Lemma}
\newtheorem{assumption}{Assumption}
\newbox\qedbox
\newenvironment{proof}{\smallskip\noindent{\bf Proof.}\hskip \labelsep}%
                        {\hfill\penalty10000\copy\qedbox\par\medskip}
\newcommand{\bfR}{{\Bbb R}}
\newcommand{\bfC}{{\Bbb C}}
\newcommand{\ii}{\text{i}}
\newcommand{\e}{\text{e}}
\newcommand{\dd}{\text{d}}
\newcommand{\nn}{\nonumber}
\newcommand\be{\begin{equation}}
\newcommand\ee{\end{equation}}
\newcommand{\bea}{\begin{eqnarray}}
\newcommand{\eea}{\end{eqnarray}}
\newcommand\berr{\begin{eqnarray*}}
\newcommand\eerr{\end{eqnarray*}}
\begin{document}

\title{Long-time asymptotics for the Hirota equation on the half-line}
\author{ Boling Guo$^{a}$,\, Nan Liu$^{b,}$\footnote{Corresponding author.}\\
$^a${\small{\em Institute of Applied Physics and Computational Mathematics,  Beijing 100088, P.R. China}} \\
$^b${\small{\em The Graduate School of China Academy of Engineering Physics, Beijing 100088, P.R. China}}\setcounter{footnote}{-1}\footnote{E-mail addresses: gbl@iapcm.ac.cn (B. Guo), ln10475@163.com (N. Liu).}
}

\date{}
\maketitle

\begin{quote}
{{{\bfseries Abstract.} We consider the Hirota equation on the quarter plane with the initial and boundary values belonging to the Schwartz space. The goal of this paper is to study the long-time behavior of the solution of this initial-boundary value problem based on the asymptotic analysis of an associated matrix Riemann--Hilbert problem.

}

 {\bf Keywords:} Hirota equation,  Nonlinear steepest descent method, Long-time asymptotics.}
\end{quote}

\section{Introduction}
\setcounter{equation}{0}
The nonlinear steepest descent method was first introduced in 1993 by Deift and Zhou \cite{PD}, it turn out to be very successful for analysing the long-time asymptotics of initial-value problems for a large range of nonlinear integrable evolution equations in a rigorous and transparent form. Numerous new significant results about the asymptotics theory of initial-value problems for different completely integrable nonlinear equations were obtained based on the analysis of the corresponding Riemann--Hilbert (RH) problems \cite{AB1,AB2,AB3,RB,XJ1,XJ,HL}. After that, Fokas announced a new unified approach \cite{F1,F2} to construct the matrix RH problems for the analysis of initial-boundary value (IBV) problems for linear and nonlinear integrable systems. Therefore, by combining the ideas of the nonlinear steepest descent method in \cite{PD} with the unified transform formalism of \cite{F1}, it is also possible to study the asymptotics of solutions of IBV problems for nonlinear integrable PDEs such as the nonlinear Schr\"odinger (NLS) equation \cite{F3,AB}, the modified Korteweg-de Vries (mKdV) equation \cite{JL1}, the derivative Schr\"odinger equation \cite{JL2} and the Kundu--Eckhaus equation \cite{GL1}.

In 1973, Hirota \cite{RH} considered the following equation:
\be\label{1.2}
\ii\frac{\partial u}{\partial t}+\alpha\frac{\partial^2u}{\partial x^2}+\ii\beta\frac{\partial^3u}{\partial x^3}+3\ii\gamma|u|^2\frac{\partial u}{\partial x}+\delta|u|^2u=0,
\ee
where $u$ is a scale function, $\alpha,~\beta,~\gamma$ and $\delta$ are real positive constants with the relation $\alpha\gamma=\beta\delta$. Meanwhile, in \cite{RH}, the exact $N$-envelope-soliton solutions have been obtained there applying the method which takes his name \cite{RH1}. Equation \eqref{1.2} can be written as
\begin{equation}\label{1.1}
\ii u_t+\alpha(u_{xx}+2|u|^2u)+\ii\beta(u_{xxx}+6|u|^2u_x)=0,
\end{equation}
where we have chosen $\delta=2\alpha$, $\gamma=2\beta$ in such a way that the constraint $\alpha\gamma=\beta\delta$ is satisfied. Equation \eqref{1.1} is integrable because it is the sum of the commuting integrable flows given by the NLS equation and complex  mKdV which are PDEs belonging to the same hierarchy. On the other hand, due to the important role played in both physics and mathematics, Hirota equation has attracted much attention and various works were presented. For example, in \cite{TH}, the authors studied the multisolitons, breathers, and rogue waves for the Hirota equation \eqref{1.1} by means of the Darboux transformation. In a recent paper \cite{FD}, the explicit soliton solution formula for the Hirota equation have been constructed via the inverse scattering method and the matrix triplet method. However, it is note that the long-time asymptotics for the Hirota equation were analysed in \cite{HL} via nonlinear steepest descent method.

In this paper, we consider the IBV problem for the Hirota equation \eqref{1.1} posed on the quarter-plane domain, that is, in the domain
$$\Omega=\{(x,t)|0\leq x<\infty,~0\leq t<\infty\}.$$
We will denote the initial datum,  Dirichlet and Neumann boundary values of \eqref{1.1} as follows:
\bea
u(x,0)=u_0(x),~u(0,t)=g_0(t),~u_x(0,t)=g_1(t),~u_{xx}(0,t)=g_2(t).
\eea
We will assume $u_0(x),~g_0(t),~g_1(t),~g_2(t)$ lie in the Schwartz class $\mathcal{S}([0,\infty))$. As mentioned above, this IBV problem can be analyzed by the unified transformation approach presented in \cite{F1}. Indeed, assuming that the solution $u(x,t)$ exists and sufficiently smooth as well as rapidly decay as $x\rightarrow\infty$, one can show that it can be represented in terms of the solution of a $2\times2$ matrix RH problem formulated in the complex $k$-plane with the jump matrices given in terms of spectral functions $a(k)$, $b(k)$ obtained from the initial datum $u(x,0)=u_0(x)$ and $A(k)$, $B(k)$ obtained from the boundary values $u(0,t)=g_0(t)$, $u_x(0,t)=g_1(t)$ and $u_{xx}(0,t)=g_2(t)$. This parts have already been done in \cite{GL}.

Developing and extending the methods using in \cite{JL1,JL2}, our goal here is to explore the long-time asymptotics of the solution $u(x,t)$ for the Hirota equation \eqref{1.1} on the half-line. Compared with the analysis of the initial-value problem for \eqref{1.1}, the RH problem relevant for the IBV problem \cite{GL} also has jumps across additional two contours whereas the RH problem relevant for the initial-value problem only has a jump across $\bfR$. Moreover, the jumps across these additional two lines involve the spectral function $h(k)$. During the asymptotic analysis, one should find an suitable analytic approximation $h_a(t,k)$ of $h(k)$. Thus, the nonlinear steepest descent analysis of the half-line problem for \eqref{1.1} presents some additional challenges. On the other hand, for the Hirota equation \eqref{1.1}, the RH problem \cite{GL} relevant for the IBV problem has four jump matrices across the jump contours, however, there only contain three jump matrices for the mKdV equation \cite{AB4}. Another difference compared with the mKdV equation is that its spectral curve possesses two non-symmetric stationary points, which also different from that of derivative NLS equation considered in \cite{JL2} where the phase function has a single critical point. Therefore, the study of the long-time asymptotics for the IBV problem for \eqref{1.1} on the half-line is more involved. These are some innovation points of the present paper.

The organization of this paper is as follows. In Section 2, we briefly recall how the solution of Hirota equation \eqref{1.1} on the half-line can be expressed in terms of the solution of a $2\times2$ matrix RH problem. In Section 3, we transform the original RH problem to a form suitable for determining the long-time asymptotics. Local models for the RH problem near the two critical points are considered in Section 4. In Section 5, we derive the long-time asymptotic behavior of the solution of the Hirota equation \eqref{1.1} by combining the above analysis.
\section{Preliminaries}
\setcounter{equation}{0}
\setcounter{lemma}{0}
\setcounter{theorem}{0}
\subsection{The RH problem}
Let
\begin{eqnarray}
&&\sigma_3={\left( \begin{array}{cc}
1 & 0 \\
0 & -1\\
\end{array}
\right )},\quad
U(x,t)={\left( \begin{array}{cc}
0 & u \\
-u^* & 0 \\
\end{array}
\right )},\label{2.1}\\
&&V(x,t;k)=4\beta k^2U(x,t)+k V_1(x,t)+V_2(x,t),\label{2.2}\\
&&V_1(x,t)={\left( \begin{array}{cc}
2\ii\beta|u|^2  & 2\ii\beta u_x+2\alpha u\\
2\ii\beta u^*_x-2\alpha u^* & -2\ii\beta|u|^2 \\
\end{array}
\right )},\label{2.3}\\
&&V_2(x,t)={\left( \begin{array}{cc}
\ii\alpha|u|^2+\beta(uu_x^*-u^*u_x)  & \ii\alpha u_x-\beta(u_{xx}+2|u|^2u)\\
\ii\alpha u^*_x+\beta(u^*_{xx}+2|u|^2u^*) & -\ii\alpha|u|^2-\beta(uu_x^*-u^*u_x) \\
\end{array}
\right )}.\label{2.4}
\end{eqnarray}
Then the Hirota equation \eqref{1.1} is the condition of compatibility of \cite{TH}
\begin{equation}\label{2.5}
\begin{aligned}
&\Psi_x+\ii k[\sigma_3,\Psi]=U(x,t)\Psi,\\
&\Psi_t+\ii(4\beta k^3+2\alpha k^2)[\sigma_3,\Psi]=V(x,t;k)\Psi.\\
\end{aligned}
\end{equation}
where $\Psi(x,t;k)$ is a $2\times2$ matrix-valued function and $k\in\bfC$ is the spectral parameter. Following \cite{GL}, one can use the Lax pair \eqref{2.5} to define three suitable eigenfunctions to formulate the main RH problem. Furthermore, the solution $u(x,t)$ of Hirota equation \eqref{1.1} can be expressed in terms of the solution of this $2\times2$ matrix RH problem.

Assume the initial and boundary values $u_0(x),~g_0(t),~g_1(t),~g_2(t)$ lie in the Schwartz class $\mathcal{S}([0,\infty))$. Using these functions, we can define four spectral functions $\{a(k),~b(k),~A(k),~B(k)\}$ by
\be\label{2.6}
X(0,k)=\begin{pmatrix}
\overline{a(\bar{k})} & b(k)\\[4pt]
-\overline{b(\bar{k})} & a(k)\\
\end{pmatrix},\quad T(0,k)=\begin{pmatrix}
\overline{A(\bar{k})} & B(k)\\[4pt]
-\overline{B(\bar{k})} & A(k)\\
\end{pmatrix},
\ee
where $X(x,k)$ and $T(t,k)$ are the solutions of the Volterra integral equations
\bea
X(x,k)&=&I-\int_x^\infty\e^{\ii k(x'-x)\hat{\sigma}_3}\big(U(x',0)X(x',k)\big)\dd x',\label{2.7}\\
T(t,k)&=&I-\int_t^\infty\e^{\ii(4\beta k^3+2\alpha k^2)(t'-t)\hat{\sigma}_3}\big(V(0,t';k)T(t',k)\big)\dd t'.\label{2.8}
\eea
Define the open domain $\{D_j\}_1^6$ of the complex $k$-plane as depicted in Fig. \ref{fig1}. Let $$\Sigma=\{k\in\bfC|\text{Im}(4\beta k^3+2\alpha k^2)=0\}$$ be the contour separating the $D_j$'s, oriented as shown in Fig. \ref{fig1}, where $$k_0=-\frac{\alpha}{3\beta}.$$

\begin{figure}[htbp]
  \centering
  \includegraphics[width=3.5in]{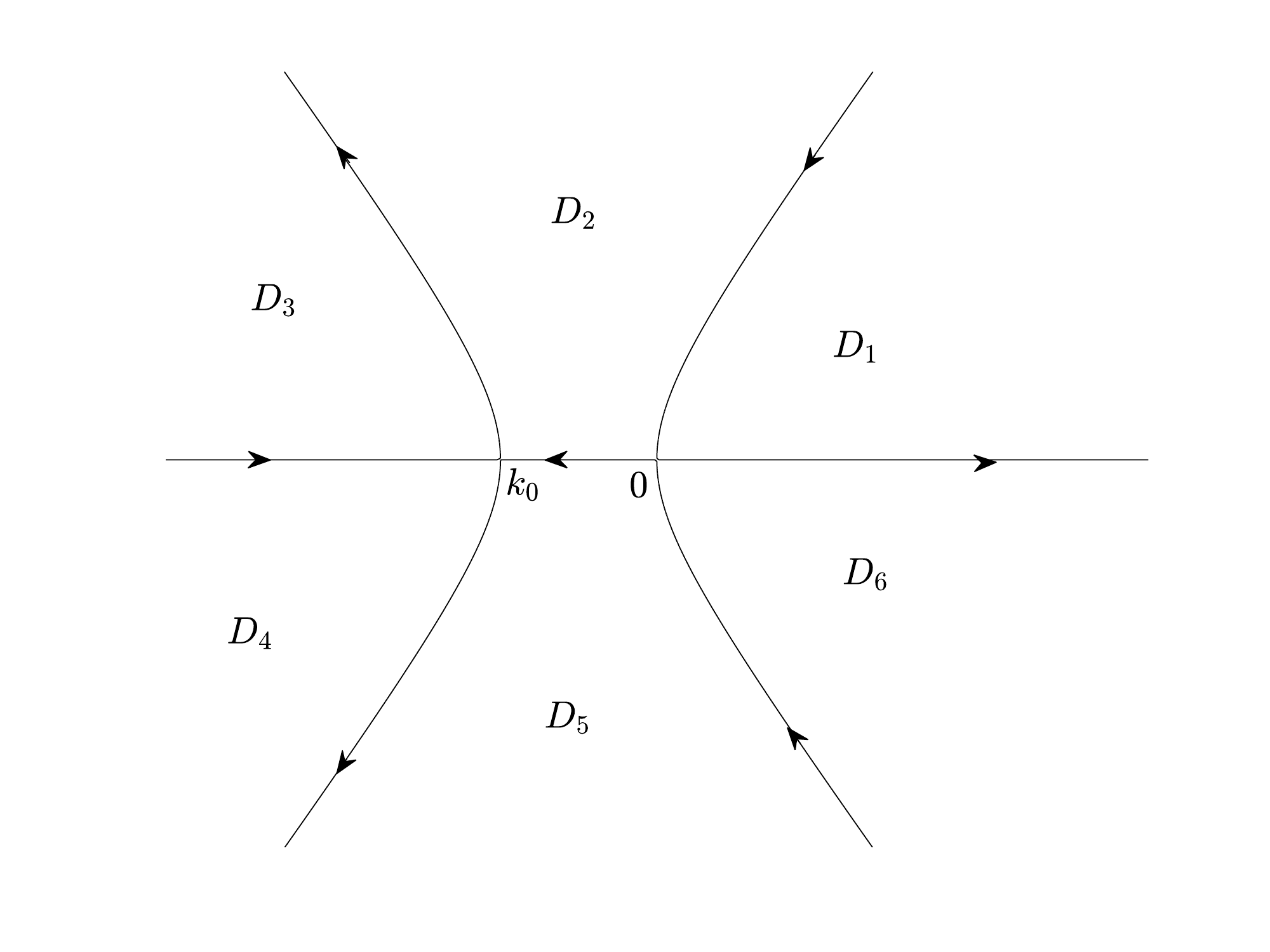}
  \caption{The contour $\Sigma$ and the domain $\{D_j\}_1^6$ in the complex $k$-plane.}\label{fig1}
\end{figure}

The analysis of the Volterra linear integral equations \eqref{2.7} and \eqref{2.8} give the following properties of $a(k)$, $b(k)$ and $A(k)$, $B(k)$.

(i) $a(k)$ and $b(k)$ are analytic for Im$k>0$, smooth and bounded for Im$k\geq0$;

(ii) $a(k)=1+O(\frac{1}{k})$, $b(k)=O(\frac{1}{k})$, as $k\rightarrow\infty$;

(iii) $a(k)\overline{a(\bar{k})}+b(k)\overline{b(\bar{k})}=1$ for $k\in\bfR$;

(iv) $A(k)$ and $B(k)$ are smooth and bounded for $k\in\bar{D}_1\cup \bar{D}_3\cup \bar{D}_5$, and analytic in $D_1\cup D_3\cup D_5$;

(v) $A(k)=1+O(\frac{1}{k})$, $B(k)=O(\frac{1}{k})$, $k\rightarrow\infty$;

(vi) $A(k)\overline{A(\bar{k})}+B(k)\overline{B(\bar{k})}=1$ for $k\in\Sigma$.

We will also need the spectral functions $c(k)$ and $d(k)$ defined by
\bea
c(k)&=&b(k)A(k)-a(k)B(k),\quad k\in\bar{D}_1\cup\bar{D}_3,\label{2.9}\\
d(k)&=&a(k)\overline{A(\bar{k})}+b(k)\overline{B(\bar{k})},\quad k\in \bar{D}_2,\label{2.10}
\eea
these functions are motivated by the fact that
$$T^{-1}(0,k)X(0,k)=\begin{pmatrix}
\overline{d(\bar{k})} & c(k)\\[4pt]
-\overline{c(\bar{k})} & d(k)\\
\end{pmatrix}.$$
The initial and boundary values $u_0(x),~g_0(t),~g_1(t),~g_2(t)$ can not be independently prescribed but must satisfy an important compatibility condition. This compatibility condition is conveniently expressed at the level of the spectral functions as the so-called global relation:
\be\label{2.11}
B(k)a(k)-A(k)b(k)=0,~k\in\bar{D}_1\cup \bar{D}_3.
\ee

\begin{assumption}\label{ass1}
In what follows, we assume that the following conditions hold:

$\bullet$  the initial and boundary values lie in the Schwartz class.

$\bullet$ the spectral functions $a(k), b(k), A(k), B(k)$ defined in \eqref{2.6} satisfy the global relation \eqref{2.11}.

$\bullet$ $a(k)$ and $d(k)$ have no zeros in $\bar{D}_1\cup \bar{D}_2\cup \bar{D}_3$ and $\bar{D}_2$, respectively.

$\bullet$ the initial and boundary values $u_0(x),~g_0(t),~g_1(t)$, and $g_2(t)$ are compatible with equation \eqref{1.1} to all orders at $x=t=0$, i.e., they satisfy
\bea
g_0(0)=u_0(0),\quad g_1(0)&=&u_0'(0),\quad g_2(0)=u''_0(0),\nn\\
\ii g'_0(0)+\alpha(2|u_0(0)|^2u_0(0)+u_{0}''(0))&+&\ii\beta(u'''_0(0)+6|u_0(0)|^2u'_0(0))=0,\cdots.\nn
\eea
\end{assumption}
Define the spectral functions $r_1(k)$ and $h(k)$ by
\bea
r_1(k)&=&\frac{\overline{b(\bar{k})}}{a(k)},\qquad\quad~~ k\in\bfR,\label{2.12}\\
h(k)&=&-\frac{\overline{B(\bar{k})}}{a(k)d(k)},\quad k\in\bar{D}_2,\label{2.13}
\eea
and let $r(k)$ denote their sum given by
\be\label{2.14}
r(k)=r_1(k)+h(k)=\frac{\overline{c(\bar{k})}}{d(k)},\quad k\in\bfR.
\ee
Then, it can show that the following RH problem
\be\label{2.15}
M_+(x,t;k)=M_-(x,t;k)J(x,t,k),\quad \mbox{Im}(4\beta k^3+2\alpha k^2)=0,
\ee
with the jump matrix $J(x,t,k)$ depicted in Fig. \ref{fig2} is defined by
\begin{figure}[htbp]
  \centering
  \includegraphics[width=3.5in]{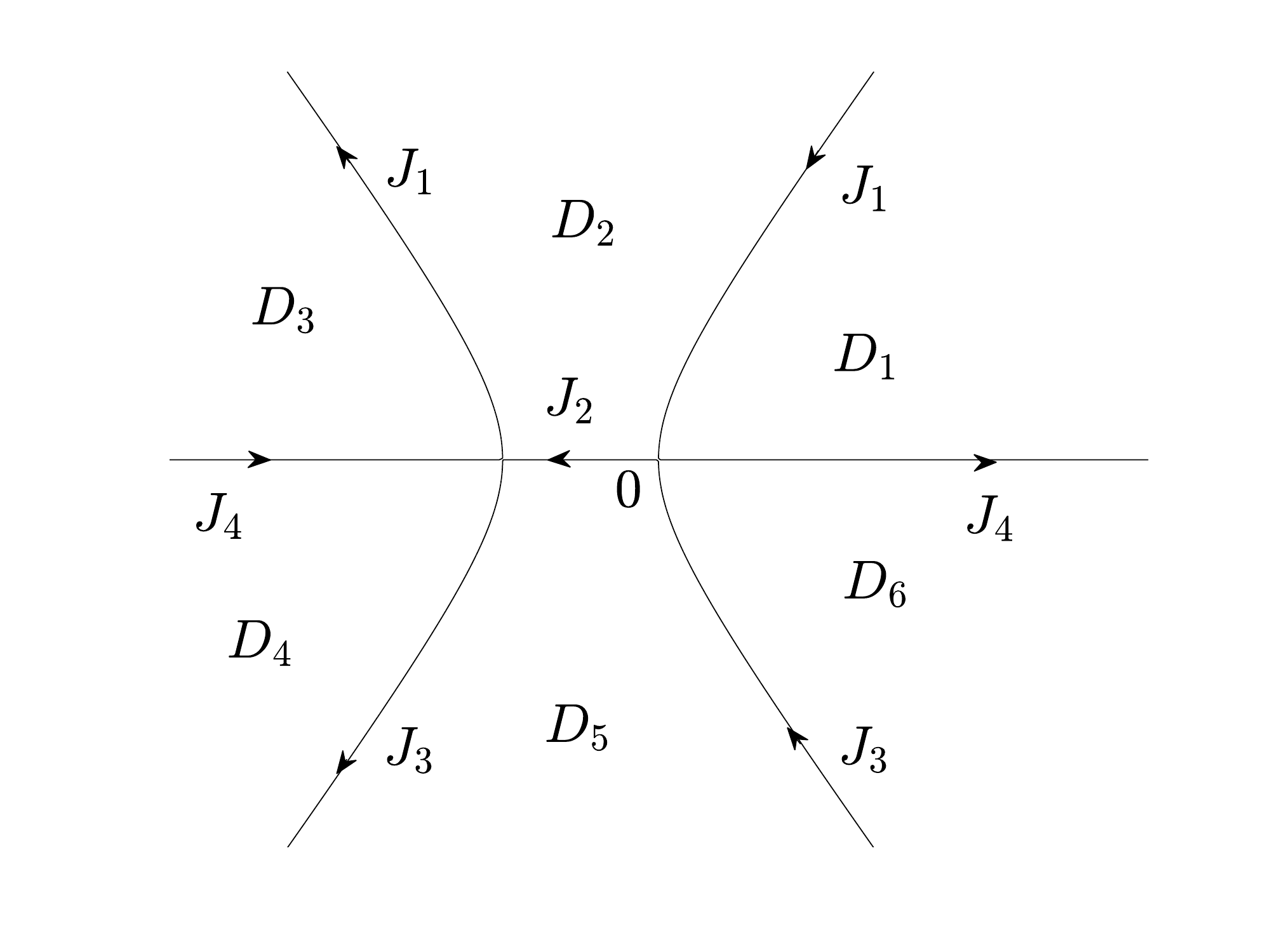}
  \caption{The contour $\Sigma$ for the RH problem.}\label{fig2}
\end{figure}
\bea\label{2.16}
&&J_1(x,t,k)=\begin{pmatrix}
1 & 0\\[4pt]
-h(k)\e^{t\Phi(k)} & 1
\end{pmatrix},\nn\\
&&J_4(x,t,k)=\begin{pmatrix}
1+r_1(k)\overline{r_1(\bar{k})} ~& \overline{r_1(\bar{k})}\e^{-t\Phi(k)} \\[4pt]
r_1(k)\e^{t\Phi(k)} ~& 1 \\
\end{pmatrix},\\
&&J_3(x,t,k)=\begin{pmatrix}
1 &~ -\overline{h(\bar{k})}\e^{-t\Phi(k)}\\[4pt]
0 & 1
\end{pmatrix},\nn\\
&&J_2(x,t,k)=(J_1J_4^{-1}J_3)(x,t,k)=\begin{pmatrix}
1 &~ -\overline{r(\bar{k})}\e^{-t\Phi(k)}\\[4pt]
-r(k)\e^{t\Phi(k)} ~& 1+r(k)\overline{r(\bar{k})} \\
\end{pmatrix},\nn
\eea
where
\be\label{2.17}
\Phi(k)=2\ii\big(k\frac{x}{t}+4\beta k^3+2\alpha k^2\big),
\ee
together with the following asymptotics:
\be\label{2.18}
M(x,t;k)=I+O\bigg(\frac{1}{k}\bigg),~k\rightarrow\infty,
\ee
has a unique solution for $(x,t)\in\Omega$ and the limit $\lim_{k\rightarrow\infty}(k M(x,t;k))_{12}$
exists for each $(x,t)\in[0,\infty)\times[0,\infty).$ Moreover, $u(x,t)$ is defined by
\bea\label{2.19}
u(x,t)=2\ii\lim_{k\rightarrow\infty}(k M(x,t;k))_{12}
\eea
satisfies the Hirota equation \eqref{1.1}. Furthermore, $u(x,t)$ satisfies the initial and boundary values conditions
\berr
u(x,0)=u_0(x),~u(0,t)=g_0(t),~u_x(0,t)=g_1(t),~u_{xx}(0,t)=g_2(t).
\eerr
\subsection{A model RH problem}
After the formulation of the main RH problem, the main idea of analysis the long-time behavior is to reduce the original RH problem to a model RH problem which can be solved exactly. The following theorem is turned out suitable for determining asymptotics of a class of RH problems which arise in the study of long-time asymptotics.
\begin{figure}[htbp]
  \centering
  \includegraphics[width=3in]{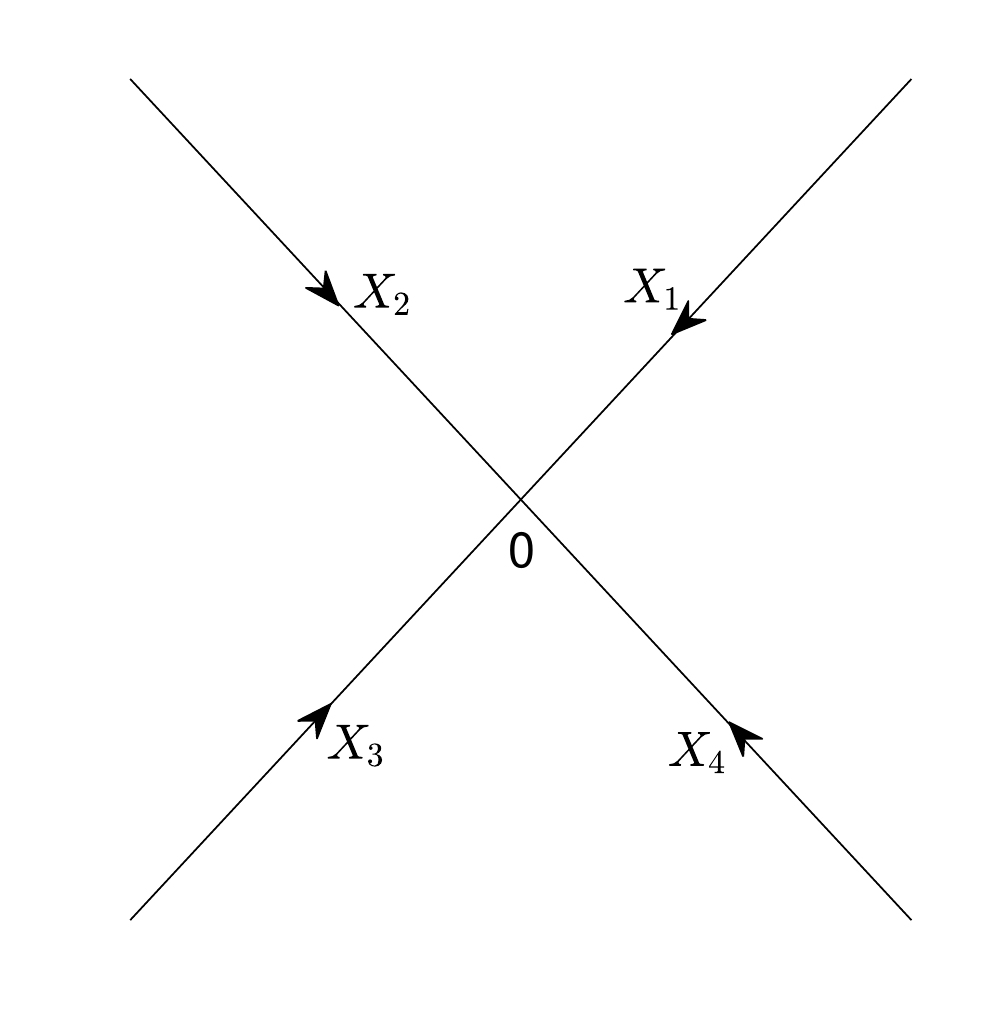}
  \caption{The contour $X=X_1\cup X_2\cup X_3\cup X_4$.}\label{fig5}
\end{figure}

Let $X=X_1\cup X_2\cup X_3\cup X_4\subset\bfC$ be the cross defined by
\be\label{3.40}
\begin{aligned}
X_1&=\{l\e^{\frac{\ii\pi}{4}}|0\leq l<\infty\},~~~ X_2=\{l\e^{\frac{3\ii\pi}{4}}|0\leq l<\infty\},\\
X_3&=\{l\e^{-\frac{3\ii\pi}{4}}|0\leq l<\infty\},~ X_4=\{l\e^{-\frac{\ii\pi}{4}}|0\leq l<\infty\},
\end{aligned}
\ee
and oriented as in Fig. \ref{fig5}. Define the function $\nu:\bfC\rightarrow(0,\infty)$ by $\nu(q)=\frac{1}{2\pi}\ln(1+|q|^2)$. We consider the following RH problems parametrized by $q\in\bfC$:
\be\label{3.42}\left\{
\begin{aligned}
&M^X_+(q,z)=M^X_-(q,z)J^X(q,z),~\text{for~almost~every}~z\in X,\\
&M^X(q,z)\rightarrow I,~~~~\qquad\qquad\qquad\text{as}~z\rightarrow\infty,
\end{aligned}
\right.
\ee
where the jump matrix $J^X(q,z)$ is defined by
\be\label{2.22}
J^X(q,z)=\left\{
\begin{aligned}
&\begin{pmatrix}
1 ~& 0\\[4pt]
-q\e^{\frac{\ii z^2}{2}}z^{2\ii\nu(q)} ~& 1
\end{pmatrix},~~\quad\quad z\in X_1,\\
&\begin{pmatrix}
1 ~& \frac{\bar{q}}{1+|q|^2}\e^{-\frac{\ii z^2}{2}}z^{-2\ii\nu(q)}\\[4pt]
0 ~& 1
\end{pmatrix},~~ z\in X_2,\\
&\begin{pmatrix}
1 ~& 0\\[4pt]
\frac{q}{1+|q|^2}\e^{\frac{\ii z^2}{2}}z^{2\ii\nu(q)} ~& 1
\end{pmatrix},~~\quad z\in X_3,\\
&\begin{pmatrix}
1 ~& -\bar{q}\e^{-\frac{\ii z^2}{2}}z^{-2\ii\nu(q)}\\[4pt]
0 ~& 1
\end{pmatrix},~\quad~ z\in X_4.
\end{aligned}
\right.
\ee
Then the RH problem \eqref{3.42} can be solved explicitly in terms of parabolic cylinder functions \cite{PD,PD1}.
\begin{theorem}\label{th4.1}
The RH problem \eqref{3.42} has a unique solution $M^X(q,z)$ for each $q\in \bfC$. This solution satisfies
\be\label{3.43}
M^X(q,z)=I-\frac{\ii}{z}\begin{pmatrix}
0 ~& \beta^X(q)\\[4pt]
\overline{\beta^X(q)} ~& 0
\end{pmatrix}+O\bigg(\frac{q}{z^2}\bigg),\quad z\rightarrow\infty,~q\in\bfC,
\ee
where the error term is uniform with respect to $\arg z\in[0,2\pi]$ and the function $\beta^X(q)$ is given by
\be\label{3.44}
\beta^X(q)=\sqrt{\nu(q)}\e^{\ii\big(\frac{\pi}{4}-\arg q-\arg\Gamma(\ii\nu(q))\big)},\quad q\in\bfC,
\ee
where $\Gamma(\cdot)$ denotes the standard Gamma function. Moreover, for each compact subset $\mathcal{D}$ of $\bfC$,
\be\label{3.45}
\sup_{q\in\mathcal{D}}\sup_{z\in\bfC\setminus X}|M^X(q,z)|<\infty
\ee
and
\be\label{3.46}
\sup_{q\in\mathcal{D}}\sup_{z\in\bfC\setminus X}\frac{|M^X(q,z)-I|}{|q|}<\infty.
\ee
\end{theorem}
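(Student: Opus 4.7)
The plan is to follow the standard Deift--Zhou strategy: reduce the RH problem to one with $z$-independent jumps by an explicit conjugation, show that the transformed solution satisfies a first-order linear ODE, and solve that ODE in closed form via parabolic cylinder functions. Uniqueness is quick: since $\det J^X(q,\cdot)\equiv 1$ on every $X_j$ (as is verified directly from \eqref{2.22}), $\det M^X$ extends to an entire function with $\det M^X\to 1$ at infinity, hence $\det M^X\equiv 1$. Two solutions $M^X,\widetilde{M}^X$ then give $M^X(\widetilde{M}^X)^{-1}$ entire and $\to I$, so they coincide by Liouville's theorem.

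For existence, I would set
\[
\Psi(q,z)=M^X(q,z)\,z^{\ii\nu(q)\sigma_3}\e^{-\frac{\ii z^2}{4}\sigma_3},
\]
with branch cut along $(-\infty,0)$. The exponents and powers in $J^X$ are chosen precisely so that $\Psi_+ = \Psi_-\,J_0(q)$ with $J_0(q)$ piecewise constant in $z$ along $X$. Hence $\Psi_z\Psi^{-1}$ is analytic on $\bfC\setminus X$ with no jump across $X$, i.e.\ entire; comparing with the behaviour of $M^X$ at infinity gives
\[
\Psi_z\Psi^{-1}=-\frac{\ii z}{2}\sigma_3+\beta_0(q),\qquad \beta_0(q)=\begin{pmatrix}0 & \beta_{12}\\ \beta_{21} & 0\end{pmatrix}.
\]
Componentwise this is equivalent to the parabolic cylinder equation $f''+(\tfrac12-a-\tfrac{z^2}{4})f=0$ with $a=\pm\ii\nu(q)$, whose solutions are $D_a(\pm\e^{\pm\ii\pi/4}z)$; piecing these together sector by sector using the connection formula
\[
D_{-a-1}(\ii z)=\frac{\Gamma(-a)}{\sqrt{2\pi}}\Big(\e^{\frac{\pi\ii a}{2}}D_a(z)+\e^{-\frac{\pi\ii a}{2}}D_a(-z)\Big)
\]
to enforce the four jumps in \eqref{2.22} pins down $\beta_{12}$ and $\beta_{21}$ uniquely.

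Undoing the conjugation and using the asymptotics $D_a(z)\sim z^a\e^{-z^2/4}$ yields \eqref{3.43} with $\beta^X(q)=\beta_{12}$. The modulus is read off from the jump factorisation via Euler's reflection $|\Gamma(\ii\nu)|^2=\pi/(\nu\sinh\pi\nu)$ together with $2\pi\nu(q)=\ln(1+|q|^2)$, giving $|\beta^X(q)|^2=\nu(q)$; the phase $\tfrac{\pi}{4}-\arg q-\arg\Gamma(\ii\nu(q))$ is extracted from the explicit constants appearing in the parabolic cylinder connection formulas. The uniform bounds \eqref{3.45}--\eqref{3.46} follow from the closed form: on a compact $\mathcal{D}\subset\bfC$ both $\nu(q)$ and $|q|$ are bounded, so the $D_a$-entries are uniformly bounded in $z$; and since $J^X(0,z)\equiv I$ gives $M^X(0,z)\equiv I$, smooth dependence on $q$ yields $|M^X(q,z)-I|\lesssim|q|$ on $\mathcal{D}$. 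The main obstacle is the phase bookkeeping: tracking the branch cuts of $z^{\pm\ii\nu}$ and the arguments arising from the four $D_a$-sectors so that the argument of $\beta^X(q)$ comes out exactly as in \eqref{3.44} is delicate, though entirely routine in the Deift--Zhou framework.
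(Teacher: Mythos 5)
The paper does not actually prove this theorem: its ``proof'' is a one-line citation of Deift--Zhou and Lenells, and your sketch is precisely the parabolic-cylinder argument carried out in those references, so you are taking the same route and the overall structure (Liouville uniqueness, conjugation to constant jumps, reduction to the Weber equation, connection formulas for $\beta^X$, and uniform bounds from the explicit solution) is sound. One concrete correction: with the jump matrix \eqref{2.22}, the conjugation that renders the jumps $z$-independent is $\Psi=M^X\,z^{-\ii\nu(q)\sigma_3}\e^{-\frac{\ii z^2}{4}\sigma_3}$, not $z^{+\ii\nu(q)\sigma_3}\e^{-\frac{\ii z^2}{4}\sigma_3}$ as you wrote; since conjugating a lower-triangular jump by $d^{\sigma_3}$ multiplies its $(21)$ entry by $d^2$, your choice turns the $X_1$ entry into $-qz^{4\ii\nu(q)}$ rather than the constant $-q$. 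Note also that $z^{-\ii\nu(q)\sigma_3}$ introduces an additional constant jump across its branch cut, which must be absorbed into the cyclic product of sector jumps --- this is part of the ``phase bookkeeping'' you defer, and it is exactly where the factor $\e^{\ii(\frac{\pi}{4}-\arg q-\arg\Gamma(\ii\nu(q)))}$ in \eqref{3.44} is determined.
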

\begin{proof}
The proof of this theorem can be analogously derived by the procedure used in \cite{PD,PD1,JL3}.
\end{proof}
\section{Transformations of the original RH problem}
\setcounter{equation}{0}
\setcounter{lemma}{0}
\setcounter{theorem}{0}
In this section, we aim to transform the associated original RH problem \eqref{2.15} to a solvable RH problem. In proceeding the following analysis, we need the following properties for the functions $r_1(k),~h(k),~r(k)$:

$\bullet$ $r_1(k)$ is smooth and bounded on $\bfR$;

$\bullet$ $h(k)$ is smooth and bounded on $\bar{D}_2$ and analytic in $D_2$;

$\bullet$ $r(k)$ is smooth and bounded on $\bfR$;

$\bullet$ There exist complex constants $\{r_{1,j}\}_{j=1}^\infty$ and $\{h_j\}_{j=1}^\infty$ such that, for any $N\geq1$,
\bea
r_1(k)&=&\sum_{j=1}^N\frac{r_{1,j}}{k^j}+O\bigg(\frac{1}{k^{N+1}}\bigg),~~|k|\rightarrow\infty,~~k\in\bfR,\label{3.1}\\
h(k)&=&\sum_{j=1}^N\frac{h_j}{k^j}+O\bigg(\frac{1}{k^{N+1}}\bigg),~~k\rightarrow\infty,~~k\in\bar{D}_2.\label{3.2}
\eea

By performing a number of transformations, we can bring the RH problem \eqref{2.15} to a form suitable for determining the long-time asymptotics. Let $\xi=\frac{x}{t}$.  The jump matrix $J$ defined in \eqref{2.16} involves the exponentials $\e^{\pm t\Phi}$, where $\Phi(k)$ is defined by
$$\Phi(k)=2\ii\big(k\xi+4\beta k^3+2\alpha k^2\big),\quad k\in\bfC.$$ In particular, we suppose $$\xi<\frac{\alpha^2}{3\beta}.$$
It follows that there are two different real stationary points located at the points where $\frac{\partial\Phi}{\partial k}=0$,
namely, at
\bea
k_1&=&\frac{-\alpha-\sqrt{\alpha^2-3\beta\xi}}{6\beta},\label{3.3}\\
k_2&=&\frac{-\alpha+\sqrt{\alpha^2-3\beta\xi}}{6\beta}\label{3.4}.
\eea

\begin{figure}[htbp]
  \centering
  \includegraphics[width=4in]{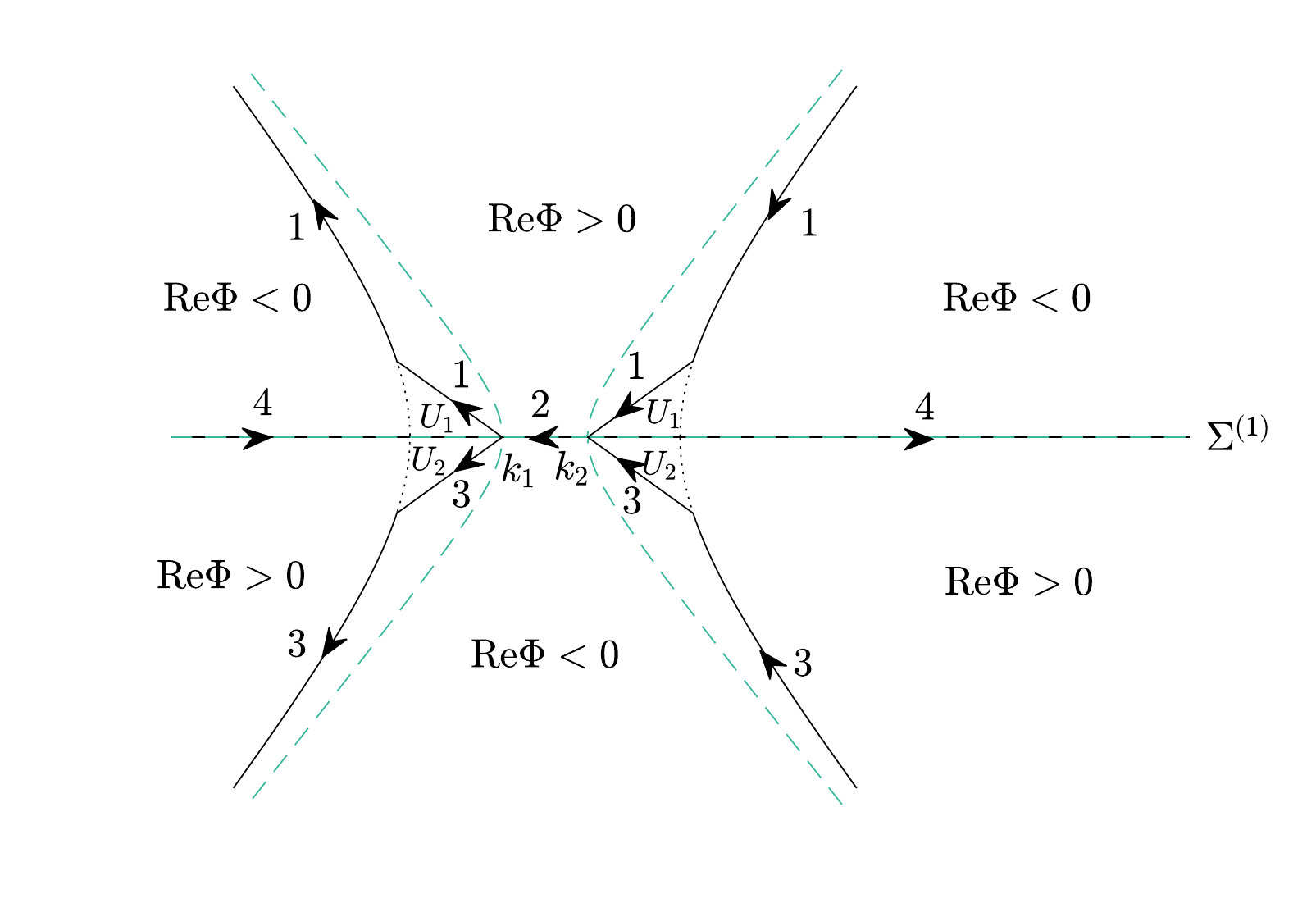}
  \caption{The contour $\Sigma^{(1)}$ and the signature table for Re$\Phi$ (dashed line) in the complex $k$-plane.}\label{fig3}
\end{figure}

The first transformation is to deform the upper/lower plane parts of $\Sigma$ so that it passes through the critical points $k_1$ and $k_2$. Letting $U_1$ and $U_2$ denote the domains shown in Fig. \ref{fig3}. The first transform is:
\be\label{3.5}
M^{(1)}(x,t;k)=M(x,t;k)\times\left\{
\begin{aligned}
&\begin{pmatrix}
1 & 0\\[4pt]
-h(k)\e^{t\Phi(k)} & 1
\end{pmatrix},~~~~k\in U_1,\\
&\begin{pmatrix}
1 &~ \overline{h(\bar{k})}\e^{-t\Phi(k)}\\[4pt]
0 & 1
\end{pmatrix},~~~~k\in U_2,\\
&I,\qquad\qquad\qquad\qquad~~ \text{elsewhere}.
\end{aligned}
\right.
\ee
Then we obtain the RH problem
\be\label{3.6}
M^{(1)}_+(x,t;k)=M^{(1)}_-(x,t;k)J^{(1)}(x,t,k)
\ee
on the contour $\Sigma^{(1)}$ depicted in Fig. \ref{fig3}. The jump matrix $J^{(1)}(x,t,k)$ is given by
\bea
J^{(1)}_1&=&\begin{pmatrix}
1 & 0\\[4pt]
-h\e^{t\Phi} & 1
\end{pmatrix},\qquad J^{(1)}_2=\begin{pmatrix}
1 &~ -\bar{r}\e^{-t\Phi}\\[4pt]
-r\e^{t\Phi} ~& 1+r\bar{r} \\
\end{pmatrix},\nn\\
J^{(1)}_3&=&\begin{pmatrix}
1 &~ -\bar{h}\e^{-t\Phi}\\[4pt]
0 & 1
\end{pmatrix},~~J^{(1)}_4=\begin{pmatrix}
1+r_1\bar{r}_1 ~& \bar{r}_1\e^{-t\Phi} \\[4pt]
r_1\e^{t\Phi} ~& 1 \\
\end{pmatrix},\nn
\eea
where $J^{(1)}_i$ denotes the restriction of $J^{(1)}$ to the contour labeled by $i$ in Fig. \ref{fig3}.

The next transformation is:
\be\label{3.7}
M^{(2)}(x,t;k)=M^{(1)}(x,t;k)\delta^{-\sigma_3}(k),
\ee
where the complex-valued function $\delta(k)$ is given by
\be\label{3.8}
\delta(k)=\exp\bigg\{\frac{1}{2\pi\ii}\int^{k_2}_{k_1}\frac{\ln(1+|r(s)|^2)}{s-k}\dd s\bigg\},\quad k\in\bfC\setminus[k_1,k_2].
\ee
\begin{lemma}
The function $\delta(k)$ has the following properties:

(i) $\delta(k)$ satisfies the following jump condition across the real axis oriented in Fig. \ref{fig3}:
\berr
\delta_+(k)=\left\{
\begin{aligned}
&\frac{\delta_-(k)}{1+|r(k)|^2},~~k\in(k_1,k_2),\\
&\delta_-(k),\quad\qquad k\in\bfR\setminus[k_1,k_2].
\end{aligned}
\right.
\eerr

(ii) As $k\rightarrow\infty$, $\delta(k)$ satisfies the asymptotic formula
\be\label{3.9}
\delta(k)=1+O(k^{-1}),\quad k\rightarrow\infty.
\ee

(iii) $\delta(k)$ and $\delta^{-1}(k)$ are bounded and analytic functions of $k\in\bfC\setminus[k_1,k_2]$ with continuous boundary values on $(k_1,k_2)$.

(iv) $\delta(k)$ obeys the symmetry $$\delta(k)=\overline{\delta(\bar{k})}^{-1},\quad k\in\bfC\setminus[k_1,k_2].$$
\end{lemma}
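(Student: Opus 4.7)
The plan is to derive all four properties directly from the Cauchy-type integral representation \eqref{3.8}, which writes $\delta = \exp(\mathcal{C}\psi)$ with density $\psi(s) = \ln(1+|r(s)|^2)$ supported on the finite interval $[k_1,k_2]$. Because $r(k)$ is smooth and bounded on $\bfR$, $\psi$ is a real-valued, smooth, compactly supported function on $\bfR$, so standard Cauchy-integral theory applies with minimal fuss.

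For property (i), I would apply the Plemelj--Sokhotski formulas to the Cauchy integral $\mathcal{C}\psi(k) = \frac{1}{2\pi\ii}\int_{k_1}^{k_2}\frac{\psi(s)}{s-k}\dd s$. For $k \in (k_1,k_2)$ one obtains $(\mathcal{C}\psi)_+ - (\mathcal{C}\psi)_- = \psi(k)$, and exponentiating gives $\delta_+(k)/\delta_-(k) = \e^{\psi(k)} = 1 + |r(k)|^2$, which matches the stated jump after accounting for the orientation of $\bfR$ in Fig.~\ref{fig3} (the contour points from $+\infty$ toward $-\infty$, so the $+$ and $-$ sides are swapped relative to the standard convention; this exactly turns $1+|r|^2$ into its reciprocal). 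For $k \in \bfR\setminus[k_1,k_2]$ the density vanishes at $k$, so $(\mathcal{C}\psi)_+ = (\mathcal{C}\psi)_-$ and there is no jump.

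For property (ii), I would expand $\frac{1}{s-k} = -\frac{1}{k}\sum_{j\geq 0}(s/k)^j$ for $|k|$ larger than $\max(|k_1|,|k_2|)$, which yields $\mathcal{C}\psi(k) = -\frac{1}{2\pi\ii k}\int_{k_1}^{k_2}\psi(s)\dd s + O(k^{-2})$, so $\delta(k) = 1 + O(k^{-1})$. Property (iv) follows from replacing $k$ by $\bar k$ and conjugating inside \eqref{3.8}: since $\psi(s) = \ln(1+|r(s)|^2)$ is real for $s\in\bfR$, we get $\overline{\delta(\bar k)} = \exp\bigl\{-\frac{1}{2\pi\ii}\int_{k_1}^{k_2}\frac{\psi(s)}{s-k}\dd s\bigr\} = \delta(k)^{-1}$.

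The main work lies in property (iii), specifically the boundedness near the endpoints $k_1,k_2$, since analyticity off $[k_1,k_2]$ is immediate from the integral. The Cauchy integral of a smooth compactly supported density on an interval behaves like $-\frac{\psi(k_j)}{2\pi\ii}\log(k-k_j) + O(1)$ as $k \to k_j$, which is \emph{unbounded}. However, the exponential $\delta(k) = \e^{\mathcal{C}\psi}$ then behaves like $(k-k_j)^{\mp\psi(k_j)/(2\pi\ii)}$, i.e., with a \emph{purely imaginary} exponent because $\psi$ is real. Hence $|\delta(k)|$ and $|\delta(k)|^{-1}$ are bounded (in fact they tend to $1$) near each endpoint, and combined with continuity of $\delta$ on the complement of $[k_1,k_2]$, the decay at infinity from (ii), and the maximum principle applied to $\log|\delta|$ (harmonic off the cut), we obtain the desired global bounds. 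Continuity of the non-tangential boundary values on $(k_1,k_2)$ follows from the Hölder regularity of $\psi$ via the Plemelj formula. This endpoint analysis is the only slightly delicate step; everything else is bookkeeping.
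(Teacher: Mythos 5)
The paper states this lemma without giving any proof, so there is nothing to compare against line by line; your argument is the standard one and it is essentially correct. The Plemelj computation for (i), the geometric-series expansion of $(s-k)^{-1}$ for (ii), and the conjugation identity for (iv) (using that $\ln(1+|r(s)|^2)$ is real) are all sound, and you correctly identify the endpoint behaviour as the only delicate point in (iii): exponentiating the logarithmic singularity of the Cauchy integral produces $(k-k_j)^{\pm\ii\nu(k_j)}$ with purely imaginary exponent, which is exactly the structure the paper itself exploits later in \eqref{3.34}. Two small corrections. First, $|\delta|$ does \emph{not} tend to $1$ at the endpoints: $|(k-k_j)^{\ii\nu}|=\e^{-\nu\arg(k-k_j)}$ depends on the angle of approach and only satisfies $\e^{-\pi\nu}\leq|(k-k_j)^{\ii\nu}|\leq\e^{\pi\nu}$, which is all you actually need. (A shortcut that bypasses both the endpoint analysis and the maximum principle: since $\mathrm{Re}\,\frac{1}{2\pi\ii(s-k)}=\frac{\mathrm{Im}\,k}{2\pi|s-k|^2}$ is a multiple of the Poisson kernel and $\ln(1+|r|^2)\geq0$, one gets directly $|\delta(k)|^{\pm1}\leq\sup_{s\in[k_1,k_2]}(1+|r(s)|^2)^{1/2}$ on all of $\bfC\setminus[k_1,k_2]$.) Second, your handling of the orientation in (i) is the right call and worth keeping explicit: with the interval oriented from $k_1$ to $k_2$ and the $+$ side above, Plemelj applied to \eqref{3.8} would give $\delta_+=\delta_-(1+|r|^2)$, so the reciprocal in the statement forces the $+$ side in Fig.~\ref{fig3} to be the lower half-plane on $(k_1,k_2)$; this is also the only choice consistent with the factorization of $J^{(2)}_2$ into a $\delta_-$-dependent upper-triangular factor times a $\delta_+$-dependent lower-triangular factor, so it is not an artifact of your argument.
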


Then $M^{(2)}(x,t;k)$ satisfies the following RH problem
\be\label{3.10}
M^{(2)}_+(x,t;k)=M^{(2)}_-(x,t;k)J^{(2)}(x,t,k)
\ee
with the contour $\Sigma^{(2)}=\Sigma^{(1)}$ and the jump matrix $J^{(2)}=\delta_-^{\sigma_3}J^{(1)}\delta_+^{-\sigma_3}$, namely,
\bea
J^{(2)}_1&=&\begin{pmatrix}
1 & 0\\[4pt]
-h\delta^{-2}\e^{t\Phi} & 1
\end{pmatrix},\qquad J^{(2)}_2=\begin{pmatrix}
1 ~& -r_2\delta_-^{2}\e^{-t\Phi} \\[4pt]
0 ~& 1 \\
\end{pmatrix}\begin{pmatrix}
1 ~& 0 \\[4pt]
-\bar{r}_2\delta_+^{-2}\e^{t\Phi} ~& 1 \\
\end{pmatrix},\nn\\
J^{(2)}_3&=&\begin{pmatrix}
1 &~ -\bar{h}\delta^{2}\e^{-t\Phi}\\[4pt]
0 & 1
\end{pmatrix}, \quad J^{(2)}_4=\begin{pmatrix}
1 ~& \bar{r}_1\delta^{2}\e^{-t\Phi} \\[4pt]
0 ~& 1 \\
\end{pmatrix}\begin{pmatrix}
1 ~& 0 \\[4pt]
r_1\delta^{-2}\e^{t\Phi} ~& 1 \\
\end{pmatrix},\nn
\eea
where we define $r_2(k)$ by
\be\label{3.11}
r_2(k)=\frac{\overline{r(\bar{k})}}{1+r(k)\overline{r(\bar{k})}}.
\ee

Let $N>1$ be given, and let $\mathcal{I}$ denote the interval $\mathcal{I}=(0,N]\cap(0,\frac{\alpha^2}{3\beta})$.
Before processing the next deformation, we follow the idea of \cite{JL1,JL2} and decompose each of the functions $h$, $r_1$, $r_2$ into an analytic part and a small remainder because the spectral functions have limited domains of analyticity. The analytic part of the jump matrix will be deformed, whereas the small remainder will be left on the original contour. In fact, we have the following lemmas.
\begin{lemma}\label{lem1}
There exist a decomposition
\berr
h(k)=h_a(t,k)+h_r(t,k),\quad t>0,~~k\in(\bar{D}_1\cap\bar{D}_2)\cup(\bar{D}_3\cap\bar{D}_2),
\eerr
where the functions $h_a$ and $h_r$ have the following properties:

(1) For each $t>0$, $h_a(t,k)$ is defined and continuous for $k\in\bar{D}_1\cup\bar{D}_3$ and analytic for $k\in D_1\cup D_3.$

(2) For each $\xi\in\mathcal{I}$ and each $t>0$, the function $h_a(t,k)$ satisfies
\be\label{3.12}
|h_a(t,k)|\leq \frac{C}{1+|k|^2}\e^{\frac{t}{4}|\text{Re}\Phi(k)|},\quad~k\in\bar{D}_1\cup\bar{D}_3,
\ee
where the constant $C$ is independent of $\xi, k, t$.

(3) The $L^1$, $L^2$ and $L^\infty$ norms of the function $h_r(t,\cdot)$ on $(\bar{D}_1\cap\bar{D}_2)\cup(\bar{D}_3\cap\bar{D}_2)$ are $O(t^{-3/2})$ as $t\rightarrow\infty$.
\end{lemma}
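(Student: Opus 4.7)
The plan is to adapt the standard analytic approximation procedure of Deift--Zhou, as extended to IBV problems in [JL1, JL2], which decomposes a function that is merely smooth across the contour into an analytic piece (with controlled exponential growth measured by the phase) plus a remainder that is small as $t\to\infty$. Throughout I use the smoothness of $h$ on $\bar D_2$, the analyticity of $h$ in $D_2$, and the asymptotic expansion \eqref{3.2}. Fix an integer $n\geq 4$ once and for all.

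\textbf{Step 1 (Polynomial/rational extraction).} Using the Taylor expansion of $h$ at each critical point $k_1, k_2$ up to order $n$, together with the $N$-term asymptotic expansion at infinity from \eqref{3.2}, I would construct a rational function $R(k)$ that matches $h$ to order $n$ at $k_1$, $k_2$ and captures its first $n$ coefficients at $\infty$. The poles of $R$ can be placed at any convenient point outside $\bar D_1\cup\bar D_3$ (for instance in $D_4\cup D_6$), so that $R$ is analytic on $\bar D_1\cup\bar D_3$, globally bounded there, and $O((1+|k|^2)^{-1})$ at infinity. Set $\tilde h:=h-R$; this remainder is smooth on $(\bar D_1\cap\bar D_2)\cup(\bar D_3\cap\bar D_2)$, vanishes to order $n+1$ at $k_1$ and $k_2$, and decays faster than $|k|^{-n-1}$ at infinity.

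\textbf{Step 2 (Fourier-based analytic extension of $\tilde h$).} Parametrize each component of the boundary contour by a real variable $s$ via a smooth embedding $s\mapsto\gamma(s)$, with $s=0$ corresponding to the relevant critical point. By construction $\tilde h\circ\gamma \in\mathcal S(\mathbb R)$, so Fourier inversion gives
$$\tilde h(\gamma(s))=\int_{\mathbb R}\hat f(\tau)\,e^{i\tau s}\,d\tau, \qquad \hat f\in\mathcal S(\mathbb R).$$
For each $t>0$ I split the integral at a threshold $|\tau|=\kappa(t)$, chosen so that both the growth bound \eqref{3.12} and the decay of the remainder come out correctly (I expect $\kappa(t)\sim t^{1/2}$). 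The low-frequency piece $|\tau|\le\kappa(t)$ extends analytically into $D_1\cup D_3$ through the formula $e^{i\tau s}\mapsto e^{i\tau\,\gamma^{-1}(k)}$, and together with $R$ this defines $h_a(t,k)$; the high-frequency tail $|\tau|>\kappa(t)$ becomes $h_r(t,k)$ and stays on the original contour.

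\textbf{Step 3 (Verification of the bounds).} For \eqref{3.12}: the $(1+|k|^2)^{-1}$ factor comes from $R$ together with the rapid decay of $\hat f$. The exponential factor $e^{\frac{t}{4}|\operatorname{Re}\Phi(k)|}$ arises from $|e^{i\tau\gamma^{-1}(k)}|\le e^{|\tau|\cdot|\operatorname{Im}\gamma^{-1}(k)|}$, and the threshold $\kappa(t)$ must be calibrated against the local geometry of the level curves of $\operatorname{Re}\Phi$ so that $\kappa(t)\cdot|\operatorname{Im}\gamma^{-1}(k)|\leq \tfrac{t}{4}|\operatorname{Re}\Phi(k)|$ on $\bar D_1\cup\bar D_3$. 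For the remainder, since $\hat f\in\mathcal S(\mathbb R)$,
$$\|h_r(t,\cdot)\|_{L^\infty} \leq \int_{|\tau|>\kappa(t)}|\hat f(\tau)|\,d\tau = O(\kappa(t)^{-M})$$
for every $M$; Plancherel and H\"older give the matching $L^1$ and $L^2$ estimates. Choosing $\kappa(t)=ct^{1/2}$ (or any power larger than $t^{1/3}$) comfortably yields the claimed $O(t^{-3/2})$ rate.

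\textbf{Main obstacle.} The delicate part is calibrating the Fourier threshold $\kappa(t)$ and the parametrization $\gamma$ so that the prefactor in \eqref{3.12} comes out as $\tfrac{1}{4}$ rather than some larger fraction, uniformly in $\xi\in\mathcal I$ (note that $\mathcal I$ is bounded above by $N$ and bounded away from the collision point $\alpha^2/(3\beta)$, so $k_1$ and $k_2$ stay separated, but the two critical points move with $\xi$). Because $h$ lives only on the boundary of $D_2$, the Fourier step must be performed piecewise using a partition of unity subordinate to the pieces of that boundary meeting $\bar D_1$ and $\bar D_3$, and the vanishing of $\tilde h$ to order $n+1$ at each critical point must be used to glue the pieces smoothly. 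This quantitative bookkeeping, rather than any single conceptual step, is the main hurdle.
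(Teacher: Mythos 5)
Your overall strategy (subtract a rational approximant, Fourier-split what is left along the contour, push the good part into $D_1\cup D_3$) is the same one the paper uses, but two of your concrete choices would fail as stated. First, the matching points for the rational function are wrong: the contour carrying $h$, namely $(\bar{D}_1\cap\bar{D}_2)\cup(\bar{D}_3\cap\bar{D}_2)$, is the non-real branch of $\{\text{Im}(4\beta k^3+2\alpha k^2)=0\}$, whose finite endpoints are $k=0$ and $k=k_0=-\alpha/(3\beta)$ --- not the stationary points $k_1,k_2$ of $\Phi$, which lie on the real axis and are irrelevant to this lemma (you appear to have conflated Lemma \ref{lem1} with Lemma \ref{lem2}). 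The paper matches $h$ to fourth order at $k=0$ precisely because the map $\psi(k)=8\beta k^3+4\alpha k^2$ used to straighten the contour has $\psi'(0)=\psi'(k_0)=0$; without that vanishing, the pulled-back function $F(\psi)=(k+\ii)^2 f(k)$ would not lie in $H^2(\bfR)$ (the Jacobian $\frac{1}{24\beta k(k-k_0)}$ blows up there) and the Plancherel estimate underlying the $O(t^{-3/2})$ remainder collapses. Matching at $k_1,k_2$ buys you nothing here.

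Second, your Step 2 leaves the key analytic ingredient unspecified. An arbitrary ``smooth embedding'' $\gamma$ does not have an inverse that extends analytically to the whole unbounded region $D_1\cup D_3$, and even if it did, you would still need $|\tau|\,|\text{Im}\,\gamma^{-1}(k)|\le\frac{t}{4}|\text{Re}\,\Phi(k)|$ uniformly over that entire region, not just near the contour. The choice that makes everything work is $\gamma^{-1}=\psi$: it is entire, it maps the contour bijectively onto $\bfR$, and on $\bar D_1$ one has $|\text{Re}\,\Phi(k)|=2\xi\,\text{Im}\,k+\text{Im}\,\psi(k)\ge|\text{Re}(\ii\psi(k))|$, so the paper can split the Fourier integral one-sidedly at $s=-t/4$ (keeping all of $[-\frac t4,\infty)$ in the analytic piece, where the exponential either decays or grows by at most $\e^{\frac t4|\text{Re}\Phi|}$) rather than band-limiting at $|\tau|\le\kappa(t)$. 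Your symmetric truncation can be made to work only after you identify $\gamma^{-1}$ with $\psi$ and check $\kappa(t)\le t/4$, so the ``calibration'' you defer is in fact the structural heart of the proof. Finally, $\tilde h\circ\gamma$ is not Schwartz: a finite-order rational matching controls only finitely many derivatives and decay orders (the paper controls two derivatives, giving $\|s^2\hat F\|_{L^2}<\infty$), so the remainder is $O(t^{-3/2})$ but not $O(\kappa(t)^{-M})$ for every $M$ as you claim; this overstatement is harmless for the lemma but should be corrected.
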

\begin{proof}
We first consider the decomposition of $h(k)$ for $k\in\bar{D}_1\cap\bar{D}_2$. Since $h(k)\in C^\infty(\bar{D}_2)$, then for $k\in\bar{D}_1\cap\bar{D}_2,~n=0,1,2$,
\bea
h^{(n)}(k)&=&\frac{\dd^n}{\dd k^n}\bigg(\sum_{j=0}^4\frac{h^{(j)}(0)}{j!}k^j\bigg)+O(k^{5-n}),~~ k\rightarrow0,\label{3.13}\\
h^{(n)}(k)&=&\frac{\dd^n}{\dd k^n}\bigg(\sum_{j=1}^3h_jk^{-j}\bigg)+O(k^{-4-n}),~~~~k\rightarrow\infty.\label{3.14}
\eea
Let
\be\label{3.15}
f_0(k)=\sum_{j=2}^{9}\frac{a_j}{(k+\ii)^j},
\ee
where $\{a_j\}_2^9$ are complex constants such that
\be\label{3.16}
f_0(k)=\left\{
\begin{aligned}
&\sum_{j=0}^4\frac{h^{(j)}(0)}{j!}k^j+O(k^5),~~k\rightarrow0,\\
&\sum_{j=1}^3h_jk^{-j}+O(k^{-4}),~~~~~k\rightarrow \infty.
\end{aligned}
\right.
\ee
It is easy to verify that \eqref{3.16} imposes eight linearly independent conditions on the $a_j$, hence the coefficients $a_j$ exist and are unique. Letting $f=h-f_0$, it follows that \\
(i) $f_0(k)$ is a rational function of $k\in\bfC$ with no poles in $\bar{D}_1$;\\
(ii) $f_0(k)$ coincides with $h(k)$ to four order at 0 and to order three at $\infty$, more precisely,
\be\label{3.17}
\frac{\dd^n}{\dd k^n}f(k)=\left\{
\begin{aligned}
&O(k^{5-n}),~\quad k\rightarrow0,\\
&O(k^{-4-n}),~~ k\rightarrow\infty,
\end{aligned}
\quad k\in\bar{D}_1\cap\bar{D}_2,~n=0,1,2.
\right.
\ee
The decomposition of $h(k)$ can be derived as follows. The map $k\mapsto\psi=\psi(k)$ defined by $\psi(k)=8\beta k^3+4\alpha k^2$ is a bijection $\bar{D}_1\cap\bar{D}_2\mapsto\bfR$, so we may define a function $F$ : $\bfR\rightarrow\bfC$ by
\be\label{3.18}
F(\psi)=(k+\ii)^2f(k),~\psi\in\bfR\setminus\{0\},
\ee
$F(\psi)$ is $C^5$ for $\psi\neq0$ and
\berr
F^{(n)}(\psi)=\bigg(\frac{1}{24\beta k(k-k_0)}\frac{\partial}{\partial k}\bigg)^n\bigg((k+\ii)^2f(k)\bigg),~~\psi\in\bfR\setminus\{0\}.
\eerr
By \eqref{3.17}, $F\in C^1(\bfR)$ and $F^{(n)}(\psi)=O(|\psi|^{-2/3})$ as $|\psi|\rightarrow\infty$ for $n=0,1,2$. In particular,
\be\label{3.19}
\bigg\|\frac{\dd^nF}{\dd\psi^n}\bigg\|_{L^2(\bfR)}<\infty,\quad n=0,1,2,
\ee
that is, $F$ belongs to $H^2(\bfR)$. By the Fourier transform $\hat{F}(s)$ defined by
\be\label{3.20}
\hat{F}(s)=\frac{1}{2\pi}\int_\bfR F(\psi)\e^{-\ii\psi s}\dd\psi
\ee
where
\be\label{3.21}
F(\psi)=\int_\bfR\hat{F}(s)\e^{\ii\psi s}\dd s,
\ee
it follows from Plancherel theorem that $\|s^2\hat{F}(s)\|_{L^2(\bfR)}<\infty$. Equations \eqref{3.18} and \eqref{3.21} imply
\be
f(k)=\frac{1}{(k+\ii)^2}\int_\bfR\hat{F}(s)\e^{\ii\psi s}\dd s,\quad k\in\bar{D}_1\cap\bar{D}_2.
\ee
Writing
$$f(k)=f_a(t,k)+f_r(t,k),\quad t>0,~k\in\bar{D}_1\cap\bar{D}_2,$$
where the functions $f_a$ and $f_r$ are defined by
\bea
f_a(t,k)&=&\frac{1}{(k+\ii)^2}\int_{-\frac{t}{4}}^\infty\hat{F}(s)\e^{\ii(8\beta k^3+4\alpha k^2)s}\dd s,\quad t>0,~k\in\bar{D}_1,\\
f_r(t,k)&=&\frac{1}{(k+\ii)^2}\int^{-\frac{t}{4}}_{-\infty}\hat{F}(s)\e^{\ii(8\beta k^3+4\alpha k^2)s}\dd s,\quad t>0,~k\in\bar{D}_1\cap\bar{D}_2,
\eea
we infer that $f_a(t,\cdot)$ is continuous in $\bar{D}_1$ and analytic in $D_1$. Moreover, since $|\text{Re}[\ii(8\beta k^3+4\alpha k^2)]|\leq|\text{Re}\Phi(k)|$ for $k\in\bar{D}_1$ and $\xi\in\mathcal{I}$, we can get
\bea\label{3.25}
|f_a(t,k)|&\leq&\frac{C}{|k+\ii|^2}\|\hat{F}(s)\|_{L^1(\bfR)}\sup_{s\geq-\frac{t}{4}}\e^{s\text{Re}[\ii(8\beta k^3+4\alpha k^2)]}\nn\\
&\leq&\frac{C}{1+|k|^2}\e^{\frac{t}{4}|\text{Re}\Phi(k)|},\quad t>0,~k\in\bar{D}_1,~\xi\in\mathcal{I}.
\eea
Furthermore, we have
\bea\label{3.26}
|f_r(t,k)|&\leq&\frac{C}{|k+\ii|^2}\int_{-\infty}^{-\frac{t}{4}}s^2|\hat{F}(s)|s^{-2}\dd s\nn\\
&\leq&\frac{C}{1+|k|^2}\|s^2\hat{F}(s)\|_{L^2(\bfR)}\sqrt{\int_{-\infty}^{-\frac{t}{4}}s^{-4}\dd s},\\
&\leq&\frac{C}{1+|k|^2}t^{-3/2},\quad t>0,~k\in\bar{D}_1\cap\bar{D}_2,~\xi\in\mathcal{I}.\nn
\eea
Hence, the $L^1,L^2$ and $L^\infty$ norms of $f_r$ on $\bar{D}_1\cap\bar{D}_2$ are $O(t^{-3/2})$. Letting
\bea
h_a(t,k)&=&f_0(k)+f_a(t,k),\quad t>0,~ k\in\bar{D}_1,\\
h_r(t,k)&=&f_r(t,k),~~\qquad\qquad t>0,~ k\in\bar{D}_1\cap\bar{D}_2.
\eea
For $k\in\bar{D}_3\cap\bar{D}_2$, a similar decomposition of $h$ can also be obtained.  Thus, we find a decomposition of $h$ for $k\in(\bar{D}_1\cap\bar{D}_2)\cup(\bar{D}_3\cap\bar{D}_2)$ with the properties listed in the statement of the lemma.
\end{proof}

We introduce the open subsets $\{\Omega_j\}_1^8$, as displayed in Fig. \ref{fig4}. The following lemma describes how to decompose $r_j$, $j=1,2$ into an analytic part $r_{j,a}$ and a small remainder $r_{j,r}$. A proof can be found in \cite{JL3}.

\begin{figure}[htbp]
  \centering
  \includegraphics[width=6in]{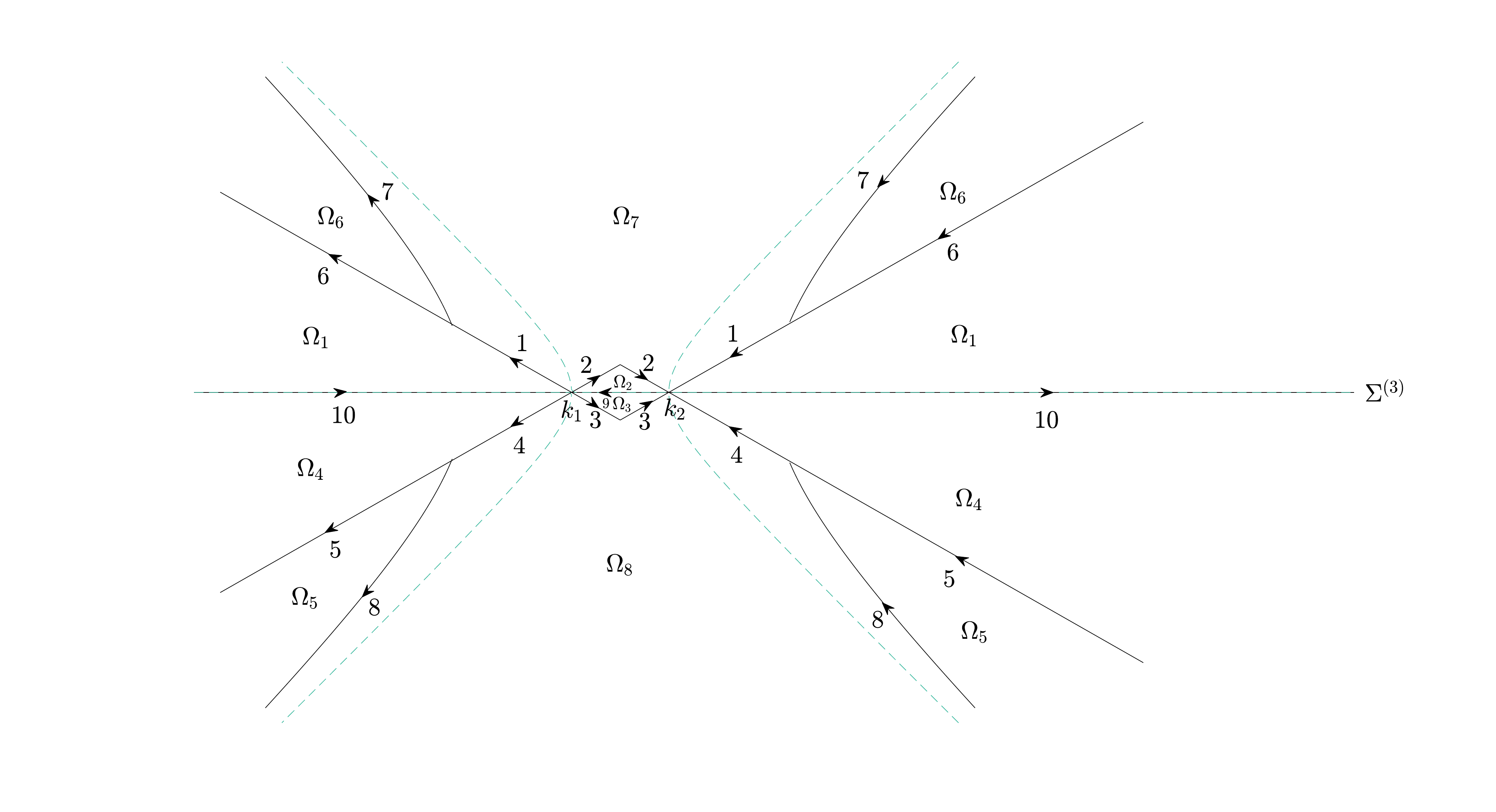}
  \caption{The contour $\Sigma^{(3)}$ and the open sets $\{\Omega_j\}_1^8$ in the complex $k$-plane.}\label{fig4}
\end{figure}

\begin{lemma}\label{lem2}
There exist decompositions
\be
\begin{aligned}
r_1(k)&=r_{1,a}(x,t,k)+r_{1,r}(x,t,k),\quad k\in(-\infty,k_1)\cup(k_2,\infty),\\
r_2(k)&=r_{2,a}(x,t,k)+r_{2,r}(x,t,k),\quad k\in(k_1,k_2),
\end{aligned}
\ee
where the functions $\{r_{j,a}, r_{j,r}\}^2_1$ have the following properties:

(1) For each $\xi\in\mathcal{I}$ and each $t>0$, $r_{j,a}(x,t,k)$ is defined and continuous for $k\in\bar{\Omega}_j$ and
analytic for $\Omega_j$, $j= 1,2$.

(2) The functions $r_{1,a}$ and $r_{2,a}$ satisfy
\be\label{3.30}
|r_{j,a}(x,t,k)|\leq \frac{C}{1+|k|^2}\e^{\frac{t}{4}|\text{Re}\Phi(k)|},
\quad t>0,~k\in\bar{\Omega}_j,~\xi\in\mathcal{I},~j=1,2,
\ee
where the constant $C$ is independent of $\xi, k, t$.

(3) The $L^1, L^2$ and $L^\infty$ norms of the function $r_{1,r}(x,t,\cdot)$ on $(-\infty,k_1)\cup(k_2,\infty)$ are $O(t^{-3/2})$ as $t\rightarrow\infty$ uniformly with respect to $\xi\in\mathcal{I}$.

(4) The $L^1, L^2$ and $L^\infty$ norms of the function $r_{2,r}(x,t,\cdot)$ on $(k_1,k_2)$ are $O(t^{-3/2})$ as $t\rightarrow\infty$ uniformly with respect to $\xi\in\mathcal{I}$.
\end{lemma}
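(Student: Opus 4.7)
The plan is to follow the Fourier-decomposition strategy used for $h$ in Lemma \ref{lem1}, adapting it to the presence of two stationary points.

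For $r_1$ I would treat the two rays $(-\infty,k_1)$ and $(k_2,\infty)$ separately. On each ray I would construct a rational approximant $f_0(k)=\sum_j a_j/(k\pm\ii)^j$ whose poles lie in the half-plane opposite to the target analyticity region, so that $f_0$ is a priori analytic in the relevant $\Omega_j$. The coefficients $a_j$ would be fixed by a linear system expressing that $r_1-f_0$ vanishes to sufficiently high order at the adjacent critical point $k_j$ (using smoothness of $r_1$ there) and at infinity (using the Laurent expansion \eqref{3.1}), exactly as in \eqref{3.15}--\eqref{3.16}. Setting $f=r_1-f_0$ and choosing a $C^\infty$ bijection $k\mapsto\psi(k)$ from a tubular neighborhood of the ray onto $\bfR$ built from $-\ii\Phi(k)/t$, the function $F(\psi)=(k+\ii)^2 f(k)$ will lie in $H^2(\bfR)$ by the same Jacobian-times-vanishing-order computation that yielded \eqref{3.19}. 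A Fourier split of $F$ at the threshold $s=-t/4$, as in the passage from $f$ to $f_a,f_r$ in Lemma \ref{lem1}, then produces an analytic piece $r_{1,a}$ obeying \eqref{3.30} (by the argument leading to \eqref{3.25}) and a remainder $r_{1,r}$ whose $L^1$, $L^2$ and $L^\infty$ norms are $O(t^{-3/2})$ (by the argument leading to \eqref{3.26}).

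For $r_2$ on the bounded interval $(k_1,k_2)$ the strategy is identical in spirit, but $f_0$ must now match Taylor data of $r_2$ at both endpoints $k_1$ and $k_2$ to sufficient order (no decay condition at infinity is required), and the change of variable sends a neighborhood of $[k_1,k_2]$ onto $\bfR$. The analytic extension lives in the $\Omega_j$ adjacent to $(k_1,k_2)$, where $\text{Re}\,\Phi$ has the sign required to guarantee decay of $\e^{\pm t\Phi}$.

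The main obstacle will be the simultaneous bookkeeping at the two stationary points: $\psi'(k)$ vanishes at both $k_1$ and $k_2$, so the reduced function $F(\psi)$ satisfies the $H^2(\bfR)$ bound only if $f$ vanishes to fourth order at \emph{each} critical point. This roughly doubles the number of matching equations compared with Lemma \ref{lem1} and forces $f_0$ to carry enough free partial-fraction coefficients; taking $f_0$ as a long enough sum resolves this combinatorial point. Uniformity in $\xi\in\mathcal{I}$ is then automatic because $k_1$, $k_2$, the coefficients of $\Phi$, and the relevant seminorms of $r_1$ and $r_2$ depend continuously on $\xi$ and $\mathcal{I}$ is bounded, so the proof reduces to a direct adaptation of the argument in \cite{JL3}.
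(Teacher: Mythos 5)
Your proposal is correct and follows essentially the same route as the paper, which proves Lemma \ref{lem1} by exactly this rational-approximant-plus-Fourier-split construction and then simply cites \cite{JL3} for Lemma \ref{lem2}, where the argument is the one you describe (matching Taylor data at the adjacent stationary point(s) and the Laurent expansion at infinity, passing to the variable $\psi$, and splitting the Fourier transform at $s=-t/4$). Your observation that $r_2$ requires matching at both endpoints $k_1$ and $k_2$, enlarging the linear system for the partial-fraction coefficients, is precisely the extra bookkeeping needed relative to Lemma \ref{lem1}.
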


The purpose of the next transformation is to deform the contour so that the jump matrix involves the exponential factor $\e^{-t\Phi}$ on the parts of the contour where Re$\Phi$ is positive and the factor $\e^{t\Phi}$ on the parts where Re$\Phi$ is negative. More precisely, we put
\be\label{3.31}
M^{(3)}(x,t;k)=M^{(2)}(x,t;k)G(k),
\ee
where
\be\label{3.32}
G(k)=\left\{
\begin{aligned}
&\begin{pmatrix}
1 &~ 0\\[4pt]
-r_{1,a}\delta^{-2}\e^{t\Phi} ~& 1
\end{pmatrix},~~\quad k\in\Omega_1,\\
&\begin{pmatrix}
1 ~& -r_{2,a}\delta^{2}\e^{-t\Phi}\\[4pt]
0 ~& 1
\end{pmatrix},\qquad k\in\Omega_2,\\
&\begin{pmatrix}
1 ~& 0\\[4pt]
\bar{r}_{2,a}\delta^{-2}\e^{t\Phi} ~& 1
\end{pmatrix},\qquad~~ k\in\Omega_3,\\
&\begin{pmatrix}
1 ~& \bar{r}_{1,a}\delta^2\e^{-t\Phi}\\[4pt]
0 ~& 1
\end{pmatrix},~~\qquad k\in\Omega_4,\\
&\begin{pmatrix}
1 ~& -\bar{h}_{a}\delta^{2}\e^{-t\Phi} \\[4pt]
0 ~& 1
\end{pmatrix},~\qquad k\in\Omega_5,\\
&\begin{pmatrix}
1 ~& 0\\[4pt]
h_{a}\delta^{-2}\e^{t\Phi} ~& 1
\end{pmatrix},~~~\qquad k\in\Omega_6,\\
&I,~~~~~~~\qquad\qquad\qquad\quad k\in\Omega_7\cup\Omega_8.
\end{aligned}
\right.
\ee
Then the matrix $M^{(3)}(x,t;k)$ satisfies the following RH problem
\be\label{3.33}
M_+^{(3)}(x,t;k)=M_-^{(3)}(x,t;k)J^{(3)}(x,t,k),
\ee
with the jump matrix $J^{(3)}=G_+^{-1}(k)J^{(2)}G_-(k)$ is given by
\bea
J^{(3)}_1&=&\begin{pmatrix}
1 & 0\\[4pt]
-(r_{1,a}+h)\delta^{-2}\e^{t\Phi} & 1
\end{pmatrix},\quad J^{(3)}_2=\begin{pmatrix}
1 &~ r_{2,a}\delta^2\e^{-t\Phi}\\
0~& 1
\end{pmatrix},\quad J^{(3)}_3=\begin{pmatrix}
1 &~ 0\\
\bar{r}_{2,a}\delta^{-2}\e^{t\Phi} ~& 1
\end{pmatrix},\nn\\
J^{(3)}_4&=&\begin{pmatrix}
1 &~ -(\bar{r}_{1,a}+\bar{h})\delta^2\e^{-t\Phi}\\
0~& 1
\end{pmatrix},\quad~
J^{(3)}_5=\begin{pmatrix}
1 &~ -(\bar{r}_{1,a}+\bar{h}_a)\delta^2\e^{-t\Phi}\\
0~& 1
\end{pmatrix},\nn\\
J^{(3)}_6&=&\begin{pmatrix}
1 &~ 0\\
-(r_{1,a}+h_a)\delta^{-2}\e^{t\Phi}~& 1
\end{pmatrix},\quad
J^{(3)}_7=\begin{pmatrix}
1 &~ 0\\[4pt]
-h_r\delta^{-2}\e^{t\Phi} & 1
\end{pmatrix},\quad
J^{(3)}_8=\begin{pmatrix}
1 &~ -\bar{h}_r\delta^{2}\e^{-t\Phi}\\[4pt]
0 & 1
\end{pmatrix},\nn\\
J^{(3)}_9&=&\begin{pmatrix}
1 ~& -r_{2,r}\delta_-^{2}\e^{-t\Phi} \\[4pt]
0 ~& 1 \\
\end{pmatrix}\begin{pmatrix}
1 ~& 0 \\[4pt]
-\bar{r}_{2,r}\delta_+^{-2}\e^{t\Phi} ~& 1 \\
\end{pmatrix}, \quad J^{(3)}_{10}=\begin{pmatrix}
1 ~& \bar{r}_{1,r}\delta^{2}\e^{-t\Phi} \\[4pt]
0 ~& 1 \\
\end{pmatrix}\begin{pmatrix}
1 ~& 0 \\[4pt]
r_{1,r}\delta^{-2}\e^{t\Phi} ~& 1 \\
\end{pmatrix},\nn
\eea
with $J^{(3)}_i$ denoting the restriction of $J^{(3)}$ to the contour labeled by $i$ in Fig. \ref{fig4}.

Obviously, the jump matrix $J^{(3)}$ decays to identity matrix $I$ as $t\rightarrow\infty$ everywhere except near the critical points $k_1$ and $k_2$. This implies that we only need to consider a neighborhood of the critical points $k_1$ and $k_2$ when we studying the long-time asymptotics of $M^{(3)}(x,t;k)$ in terms of the corresponding RH problem.

\section{Local models near $k_1$ and $k_2$}
\setcounter{equation}{0}
\setcounter{lemma}{0}
\setcounter{theorem}{0}

To focus on $k_1$ and $k_2$. We introduce the following scaling operators
\bea
&&S_{k_1}:~k\mapsto\frac{z}{\sqrt{-8t(\alpha+6\beta k_1)}}+k_1,\\
&&S_{k_2}:~k\mapsto\frac{z}{\sqrt{8t(\alpha+6\beta k_2)}}+k_2.
\eea
For $j=1,2$, let $D_\varepsilon(k_j)$ denote the open disk of radius $\varepsilon$ centered at $k_j$ for a small $\varepsilon>0$. Then, the map $k\mapsto z$ is a bijection from $D_\varepsilon(k_j)$ to the open disk of radius $\sqrt{-8t(\alpha+6\beta k_1)}\varepsilon$ and $\sqrt{8t(\alpha+6\beta k_2)}\varepsilon$ centered at the origin for all $\xi\in\mathcal{I}$, respectively. Integrating by parts in formula \eqref{3.8} yields,
\be\label{3.34}
\delta(k)=\bigg(\frac{k-k_2}{k-k_1}\bigg)^{-\ii\nu(k_1)}\e^{\chi_1(k)}=\bigg(\frac{k-k_2}{k-k_1}\bigg)^{-\ii\nu(k_2)}\e^{\chi_2(k)},
\ee
where
\bea
\nu(k_1)&=&\frac{1}{2\pi}\ln(1+|r(k_1)|^2)>0,\label{3.35}\\
\chi_1(k)&=&\frac{1}{2\pi\ii}\int_{k_1}^{k_2}\ln\bigg(\frac{1+|r(s)|^2}{1+|r(k_1)|^2}\bigg)\frac{\dd s}{s-k},\label{3.36}\\
\nu(k_2)&=&\frac{1}{2\pi}\ln(1+|r(k_2)|^2)>0,\label{3.35'}\\
\chi_2(k)&=&\frac{1}{2\pi\ii}\int_{k_1}^{k_2}\ln\bigg(\frac{1+|r(s)|^2}{1+|r(k_2)|^2}\bigg)\frac{\dd s}{s-k}.\label{3.36'}
\eea
Hence, we have
\bea
S_{k_1}(\delta(k)\e^{-\frac{t\Phi(k)}{2}})&=&\delta_{k_1}^0(z)\delta_{k_1}^1(z),\nn\\
S_{k_2}(\delta(k)\e^{-\frac{t\Phi(k)}{2}})&=&\delta_{k_2}^0(z)\delta_{k_2}^1(z),\nn
\eea
with
\bea
\delta_{k_1}^0(z)&=&\bigg(-8tk_1^2(\alpha+6\beta k_1)\bigg)^{-\frac{\ii\nu(k_1)}{2}}\e^{\chi_1(k_1)}\e^{2\ii k_1^2t(\alpha+4\beta k_1)},\label{0.1}\\
\delta_{k_1}^1(z)&=&(-z)^{\ii\nu(k_1)}\exp\bigg(\frac{\ii z^2}{4}\bigg(1-\frac{\beta z}{\sqrt{2t}(-(\alpha+6\beta k_1))^{3/2}}\bigg)\bigg)\nn\\
&&\bigg(\frac{-k_1}{-z/\sqrt{-8t(\alpha+6\beta k_1)}+k_2-k_1}\bigg)^{\ii\nu(k_1)}\e^{(\chi_1([z/\sqrt{-8t(\alpha+6\beta k_1)}]+k_1)-\chi_1(k_1))},\label{0.2}\\
\delta_{k_2}^0(z)&=&\bigg(8tk_1^2(\alpha+6\beta k_2)\bigg)^{\frac{\ii\nu(k_2)}{2}}\e^{\chi_2(k_2)}\e^{2\ii k_2^2t(\alpha+4\beta k_2)},\label{3.37}\\
\delta_{k_2}^1(z)&=&z^{-\ii\nu(k_2)}\exp\bigg(-\frac{\ii z^2}{4}\bigg(1+\frac{\beta z}{\sqrt{2t}(\alpha+6\beta k_2)^{3/2}}\bigg)\bigg)\nn\\
&&\bigg(\frac{-k_1}{z/\sqrt{8t(\alpha+6\beta k_2)}+k_2-k_1}\bigg)^{-\ii\nu(k_2)}\e^{(\chi_2([z/\sqrt{8t(\alpha+6\beta k_2)}]+k_2)-\chi_2(k_2))}.\label{3.38}
\eea
Now we define
\bea
\check{M}(x,t;z)&=&M^{(3)}(x,t;k)(\delta_{k_1}^0)^{\sigma_3}(z),\quad k\in D_\varepsilon(k_1)\setminus\Sigma^{(3)},\label{3.39}\\
\tilde{M}(x,t;z)&=&M^{(3)}(x,t;k)(\delta_{k_2}^0)^{\sigma_3}(z),\quad k\in D_\varepsilon(k_2)\setminus\Sigma^{(3)}.
\eea
Then $\check{M}$ and $\tilde{M}$ are the sectionally analytic functions of $z$ which satisfies
\bea
\check{M}_+(x,t;z)&=&\check{M}_-(x,t;z)\check{J}(x,t,z),\quad k\in \mathcal{X}_{k_1}^\varepsilon,\nn\\
\tilde{M}_+(x,t;z)&=&\tilde{M}_-(x,t;z)\tilde{J}(x,t,z),\quad k\in \mathcal{X}_{k_2}^\varepsilon,\nn
\eea
where $\mathcal{X}_{k_j}=X+k_j$ denote the cross $X$ defined by \eqref{3.40} centered at $k_j$ and $\mathcal{X}_{k_j}^\varepsilon=\mathcal{X}_{k_j}\cap D_\varepsilon(k_j)$ for $j=1,2$.

The corresponding jump matrices
\bea
\check{J}(x,t,z)&=&(\delta_{k_1}^0)^{-\hat{\sigma}_3}(z)J^{(3)}(x,t,k),\label{0.5}\\
\tilde{J}(x,t,z)&=&(\delta_{k_2}^0)^{-\hat{\sigma}_3}(z)J^{(3)}(x,t,k)\label{3.41}
\eea
are given by
\berr
\check{J}(x,t,z)=\left\{
\begin{aligned}
&\begin{pmatrix}
1 ~& -r_{2,a}(\delta_{k_1}^1)^{2}\\[4pt]
0 ~& 1
\end{pmatrix},\qquad\qquad k\in (\mathcal{X}_{k_1}^\varepsilon)_1,\\
&\begin{pmatrix}
1 ~& 0\\[4pt]
(r_{1,a}+h_a)(\delta_{k_1}^1)^{-2} ~& 1
\end{pmatrix},\quad k\in (\mathcal{X}_{k_1}^\varepsilon)_2\cap D_3,\\
&\begin{pmatrix}
1 ~& 0\\[4pt]
(r_{1,a}+h)(\delta_{k_1}^1)^{-2} ~& 1
\end{pmatrix},\quad~ k\in (\mathcal{X}_{k_1}^\varepsilon)_2\cap D_2,\\
&\begin{pmatrix}
1 ~& (\bar{r}_{1,a}+\bar{h})(\delta_{k_1}^1)^{2}\\[4pt]
0 ~& 1
\end{pmatrix},\quad\quad k\in (\mathcal{X}_{k_1}^\varepsilon)_3\cap D_5,\\
&\begin{pmatrix}
1 ~& (\bar{r}_{1,a}+\bar{h}_a)(\delta_{k_1}^1)^{2}\\[4pt]
0 ~& 1
\end{pmatrix},~~\quad k\in (\mathcal{X}_{k_1}^\varepsilon)_3\cap D_4,\\
&\begin{pmatrix}
1 ~& 0\\[4pt]
-\bar{r}_{2,a}(\delta_{k_1}^1)^{-2} ~& 1
\end{pmatrix},~~\qquad~~ k\in (\mathcal{X}_{k_1}^\varepsilon)_4,\\
\end{aligned}
\right.
\eerr
and
\berr
\tilde{J}(x,t,z)=\left\{
\begin{aligned}
&\begin{pmatrix}
1 ~& 0\\[4pt]
-(r_{1,a}+h_a)(\delta_{k_2}^1)^{-2} ~& 1
\end{pmatrix},\quad k\in (\mathcal{X}_{k_2}^\varepsilon)_1\cap D_1,\\
&\begin{pmatrix}
1 ~& 0\\[4pt]
-(r_{1,a}+h)(\delta_{k_2}^1)^{-2} ~& 1
\end{pmatrix},\quad~ k\in (\mathcal{X}_{k_2}^\varepsilon)_1\cap D_2,\\
&\begin{pmatrix}
1 ~& r_{2,a}(\delta_{k_2}^1)^{2}\\[4pt]
0 ~& 1
\end{pmatrix},~~~~\qquad\qquad k\in (\mathcal{X}_{k_2}^\varepsilon)_2,\\
&\begin{pmatrix}
1 ~& 0\\[4pt]
\bar{r}_{2,a}(\delta_{k_2}^1)^{-2} ~& 1
\end{pmatrix},~~\qquad\qquad k\in (\mathcal{X}_{k_2}^\varepsilon)_3,\\
&\begin{pmatrix}
1 ~& -(\bar{r}_{1,a}+\bar{h})(\delta_{k_2}^1)^{2}\\[4pt]
0 ~& 1
\end{pmatrix},\quad\quad k\in (\mathcal{X}_{k_2}^\varepsilon)_4\cap D_5,\\
&\begin{pmatrix}
1 ~& -(\bar{r}_{1,a}+\bar{h}_a)(\delta_{k_2}^1)^{2}\\[4pt]
0 ~& 1
\end{pmatrix},~~\quad k\in (\mathcal{X}_{k_2}^\varepsilon)_4\cap D_6.\\
\end{aligned}
\right.
\eerr
The sets $(\mathcal{X}_{k_1}^\varepsilon)_2\cap D_3$, $(\mathcal{X}_{k_1}^\varepsilon)_3\cap D_4$, $(\mathcal{X}_{k_2}^\varepsilon)_1\cap D_1$ and $(\mathcal{X}_{k_2}^\varepsilon)_4\cap D_6$ are may be empty for sufficient small $\varepsilon$.

For the jump matrix $\tilde{J}(x,t,z)$, define $$q=r(k_2),$$ then for any fixed $z\in X$, we have $k(z)\rightarrow k_2$ as $t\rightarrow\infty$. Hence,
$$r_{1,a}(k)+h(k)\rightarrow q,\quad r_{2,a}(k)\rightarrow\frac{\bar{q}}{1+|q|^2},\quad \delta_{k_2}^1\rightarrow\e^{-\frac{\ii z^2}{4}}z^{-\ii\nu(q)}.$$
This implies that the jump matrix $\tilde{J}$ tend to the matrix $J^X$ defined in \eqref{2.22} for large $t$. In other words, the jumps of $M^{(3)}$ for $k$ near $k_2$ approach those of the function $M^X(\delta_{k_2}^0)^{-\sigma_3}$ as $t\rightarrow\infty$. Therefore, we can approximate $M^{(3)}$ in the neighborhood $D_\varepsilon(k_2)$ of $k_2$ by
\be\label{3.47}
M^{(k_2)}(x,t;k)=(\delta_{k_2}^0)^{\sigma_3}M^X(q,z)(\delta_{k_2}^0)^{-\sigma_3},
\ee
where $M^X(q,z)$ is given by \eqref{3.43}.

For the case of $\check{J}(x,t,k)$, as $t\rightarrow\infty$, we find
\berr
r_{1,a}(k)+h(k)\rightarrow r(k_1),\quad r_{2,a}(k)\rightarrow\frac{\overline{r(k_1)}}{1+|r(k_1)|^2},\quad \delta_{k_1}^1\rightarrow\e^{\frac{\ii z^2}{4}}(-z)^{\ii\nu(k_1)}.
\eerr
This fact implies
\berr
\check{J}(x,t,z)\rightarrow J^{Y}(p,z)=\left\{
\begin{aligned}
&\begin{pmatrix}
1 ~& -\frac{\bar{p}}{1+|p|^2}\e^{\frac{\ii z^2}{2}}(-z)^{2\ii\nu(p)}\\[4pt]
0 ~& 1
\end{pmatrix},~~\quad z\in X_1,\\
&\begin{pmatrix}
1 ~& 0\\[4pt]
p\e^{-\frac{\ii z^2}{2}}(-z)^{-2\ii\nu(p)} ~& 1
\end{pmatrix},~~~\quad\quad z\in X_2,\\
&\begin{pmatrix}
1 ~& \bar{p}\e^{\frac{\ii z^2}{2}}(-z)^{2\ii\nu(p)}\\[4pt]
0 ~& 1
\end{pmatrix},~~~~~~\qquad z\in X_3,\\
&\begin{pmatrix}
1 ~& 0\\[4pt]
-\frac{p}{1+|p|^2}\e^{-\frac{\ii z^2}{2}}(-z)^{-2\ii\nu(p)} ~& 1
\end{pmatrix},~z\in X_4,\\
\end{aligned}
\right.
\eerr
as $t\rightarrow\infty$ if we set $$p=r(k_1).$$ On the other hand, one can verifies that
\be
J^{Y}(p,z)=\sigma_3\overline{J^X(\bar{p},-\bar{z})}\sigma_3,
\ee
which in turn implies, by uniqueness, that
\be
M^{Y}(p,z)=\sigma_3\overline{M^X(\bar{p},-\bar{z})}\sigma_3,
\ee
where $M^{Y}(p,z)$ is the unique solution of the following RH problem
\be\left\{
\begin{aligned}
&M^{Y}_+(p,z)=M^{Y}_-(p,z)J^{Y}(p,z),~\text{for~almost~every}~z\in X,\\
&M^{Y}(p,z)\rightarrow I,~~~\qquad\qquad\qquad\text{as}~z\rightarrow\infty.
\end{aligned}
\right.
\ee
Therefore, we find that
\be
M^{Y}(p,z)=I-\frac{\ii}{z}\begin{pmatrix}
0 ~& \beta^{Y}(p)\\[4pt]
\overline{\beta^{Y}(p)} &~ 0
\end{pmatrix}+O\bigg(\frac{p}{z^2}\bigg),
\ee
where
\be
\beta^{Y}(p)=\sqrt{\nu(p)}\e^{-\ii(\frac{\pi}{4}+\arg p+\arg\Gamma(-\ii\nu(p)))}.
\ee
As a consequence, we can approximate $M^{(3)}(x,t;k)$ in the neighborhood $D_\varepsilon(k_1)$ of $k_1$ by
\be\label{0.6}
M^{(k_1)}(x,t;k)=(\delta_{k_1}^0)^{\sigma_3}M^{Y}(p,z)(\delta_{k_1}^0)^{-\sigma_3}.
\ee

Then we have the following lemmas about the functions $M^{(k_2)}$ and $M^{(k_1)}$, which will be very useful in deriving the accurate asymptotic formula and error bound in next section.
\begin{lemma}\label{lem3}
For each $t>0$ and $\xi\in\mathcal{I}$, the function $M^{(k_2)}(x,t;k)$ defined in \eqref{3.47} is an analytic function of $k\in D_\varepsilon(k_2)\setminus\mathcal{X}_{k_2}^\varepsilon$. Furthermore,
\be\label{3.48}
|M^{(k_2)}(x,t;k)-I|\leq C,\quad t>3,~\xi\in\mathcal{I},~k\in\overline{ D_\varepsilon(k_2)}\setminus\mathcal{X}_{k_2}^\varepsilon.
\ee
On the other hand, across $\mathcal{X}_{k_2}^\varepsilon$, $M^{(k_2)}$ satisfied the jump condition $M_+^{(k_2)}=M_-^{(k_2)}J^{(k_2)}$ with jump matrix $$J^{(k_2)}=(\delta_{k_2}^0)^{\hat{\sigma}_3}J^X,$$ and $J^{(k_2)}$ satisfies the following estimates for $1\leq p\leq\infty$:
\be\label{3.49}
\|J^{(3)}-J^{(k_2)}\|_{L^p(\mathcal{X}_{k_2}^\varepsilon)}\leq Ct^{-\frac{1}{2}-\frac{1}{2p}}\ln t,
\quad t>3,~\xi\in\mathcal{I},
\ee
where $C>0$ is a constant independent of $t,\xi,k$. Moreover, as $t\rightarrow\infty$,
\be\label{3.50}
\|(M^{(k_2)})^{-1}(x,t;k)-I\|_{L^\infty(\partial D_\varepsilon(k_2))}=O(t^{-1/2}),
\ee
and
\be\label{3.51}
\frac{1}{2\pi\ii}\int_{\partial D_\varepsilon(k_2)}((M^{(k_2)})^{-1}(x,t;k)-I)\dd k=-\frac{(\delta_{k_2}^0)^{\hat{\sigma}_3}M^X_1(\xi)}
{\sqrt{8t(\alpha+6\beta k_2)}}+O(t^{-1}),
\ee
where $M^X_1(\xi)$ is defined by
\be\label{3.52}
M^X_1(\xi)=-\ii\begin{pmatrix}
0 ~& \beta^X(q)\\[4pt]
\overline{\beta^X(q)} ~& 0
\end{pmatrix}.
\ee
\end{lemma}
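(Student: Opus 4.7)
The plan is to verify the six assertions in sequence, exploiting two structural facts: $(\delta_{k_2}^0)^{\sigma_3}$ is a diagonal matrix independent of $k$, so conjugation by it preserves analyticity, jump-matrix structure, and size bounds; and $|\delta_{k_2}^0|=1$. I would establish the latter first, by inspection of \eqref{3.37} and \eqref{3.36'}: the prefactor $(8tk_1^2(\alpha+6\beta k_2))^{\ii\nu(k_2)/2}$ and the exponential $\e^{2\ii k_2^2 t(\alpha+4\beta k_2)}$ are pure phases, while $\chi_2(k_2)$ is purely imaginary because $\frac{1}{2\pi\ii}$ multiplies an integral that is real (the integrand is a real logarithm, and the contour and kernel $1/(s-k_2)$ with $s\in(k_1,k_2)$ are real).

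With $|\delta_{k_2}^0|=1$ in hand, analyticity of $M^{(k_2)}$ on $D_\varepsilon(k_2)\setminus\mathcal{X}_{k_2}^\varepsilon$ is inherited directly from that of $M^X(q,z)$ off $X$, and the uniform bound \eqref{3.48} follows from \eqref{3.45} applied on the compact set $\{r(k_2):\xi\in\mathcal{I}\}$, since conjugation by a unit-modulus diagonal matrix preserves the modulus of each entry. The jump identity $J^{(k_2)}=(\delta_{k_2}^0)^{\hat{\sigma}_3}J^X$ is then an immediate consequence of $M^{(k_2)}=(\delta_{k_2}^0)^{\sigma_3}M^X(\delta_{k_2}^0)^{-\sigma_3}$ together with \eqref{3.42}.

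The main work, and the step I expect to be the chief obstacle, is the $L^p$ estimate \eqref{3.49}. Because $|\delta_{k_2}^0|=1$, the norm equals $\|\tilde{J}-J^X\|_{L^p(\mathcal{X}_{k_2}^\varepsilon)}$, so on each ray one must bound the difference between the actual entry (for instance $-(r_{1,a}+h_a)(k)(\delta_{k_2}^1)^{-2}$ on $(\mathcal{X}_{k_2}^\varepsilon)_1\cap D_1$) and its limiting model entry $-q\,\e^{\ii z^2/2}z^{2\ii\nu(q)}$ with $q=r(k_2)$. I would split this difference into a coefficient defect $(r_{1,a}+h_a)(k)-q$, and a phase defect $(\delta_{k_2}^1)^{-2}-\e^{\ii z^2/2}z^{2\ii\nu(q)}$. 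The coefficient defect is $O(|k-k_2|)=O(|z|/\sqrt{t})$ by Taylor expansion, using the smoothness assertions in Lemmas \ref{lem1}--\ref{lem2}; the phase defect I would control via \eqref{3.38} using Lipschitz continuity of $\chi_2$ at $k_2$ and the smallness of the $z^3/\sqrt{t}$ correction in the exponent. The Gaussian decay $|\e^{\ii z^2/2}|=\e^{-|z|^2/(2\sqrt{2})}$ along the cross supplies $z$-integrability, while the change of variables $\dd k=\dd z/\sqrt{8t(\alpha+6\beta k_2)}$ contributes an extra factor $t^{-1/(2p)}$ to the $L^p$ norm, producing the stated exponent $-\frac{1}{2}-\frac{1}{2p}$. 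The logarithm $\ln t$ I anticipate arising from the factor $z^{2\ii\nu(k_2)}-z^{2\ii\nu(r(k))}$ on $\mathcal{X}_{k_2}^\varepsilon$, since $|z|$ can be as large as $O(\sqrt{t})$ and expanding $z^{\ii a}=\e^{\ii a\ln z}$ produces a $\ln|z|$ factor.

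The last two assertions use only the asymptotic expansion \eqref{3.43}. On $\partial D_\varepsilon(k_2)$ one has $|z|=\sqrt{8t(\alpha+6\beta k_2)}\,\varepsilon=O(\sqrt{t})$, so \eqref{3.43} gives $M^X-I=O(t^{-1/2})$; conjugation by the unit-modulus $(\delta_{k_2}^0)^{\sigma_3}$ preserves this, and inverting yields \eqref{3.50}. For \eqref{3.51}, I would substitute $M^X(q,z)-I=M_1^X(\xi)/z+O(1/z^2)$, with $M_1^X$ as in \eqref{3.52}, into the contour integral, change variables $\dd k=\dd z/\sqrt{8t(\alpha+6\beta k_2)}$, and evaluate by residues at $z=0$ on the clockwise-oriented boundary; the simple-pole contribution produces $-(\delta_{k_2}^0)^{\hat{\sigma}_3}M_1^X(\xi)/\sqrt{8t(\alpha+6\beta k_2)}$ with the correct sign from the orientation, and the $O(1/z^2)$ remainder integrates to $O(t^{-1})$.
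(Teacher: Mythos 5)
Your proposal is correct and follows essentially the same route as the paper: it uses $|\delta_{k_2}^0|=1$ together with Theorem \ref{th4.1} for the analyticity and boundedness, factors $J^{(3)}-J^{(k_2)}=(\delta_{k_2}^0)^{\hat{\sigma}_3}(\tilde{J}-J^X)$ and reduces \eqref{3.49} to pointwise estimates on $\tilde J-J^X$ along the cross (the paper defers this computation to Lemma 3.35 of Deift--Zhou and then interpolates between the $L^\infty$ and $L^1$ bounds, which is equivalent to your direct change-of-variables accounting), and obtains \eqref{3.50}--\eqref{3.51} from the expansion \eqref{3.43} and Cauchy's formula. The only quibble is a bookkeeping one in the last step: \eqref{3.51} integrates $(M^{(k_2)})^{-1}-I$, whose $1/z$ coefficient already carries a minus sign relative to $M_1^X$, so the stated sign comes out with the counterclockwise orientation of $\partial D_\varepsilon(k_2)$ rather than from a clockwise traversal as you suggest.
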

\begin{proof}
The analyticity of $M^{(k_2)}$ is obvious. Since $|\delta_{k_2}^0(z)|=1$, thus, the estimate \eqref{3.48} follows from the definition of $M^{(k_2)}$ in \eqref{3.47} and the estimate \eqref{3.46}.

On the other hand, we have
\berr
J^{(3)}-J^{(k_2)}=(\delta_{k_2}^0)^{\hat{\sigma}_3}(\tilde{J}-J^X),\quad k\in\mathcal{X}_{k_2}^\varepsilon.
\eerr
However, a careful computation as the Lemma 3.35 in \cite{PD}, we conclude that
\be\label{3.53}
\|\tilde{J}-J^X\|_{L^\infty((\mathcal{X}_{k_2}^\varepsilon)_1)}\leq C|\e^{\frac{\ii\gamma}{2}z^2}|t^{-1/2}\ln t,\quad 0<\gamma<\frac{1}{2},~t>3,~\xi\in\mathcal{I},
\ee
for $k\in(\mathcal{X}_{k_2}^\varepsilon)_1$, that is, $z=\sqrt{8t(\alpha+6\beta k_2)}u\e^{\frac{\ii\pi}{4}}$, $0\leq u\leq\varepsilon$.
Thus,
\be\label{3.53'}
\|\tilde{J}-J^X\|_{L^1((\mathcal{X}_{k_2}^\varepsilon)_1)}\leq Ct^{-1}\ln t,\quad t>3,~\xi\in\mathcal{I}.
\ee
By the general inequality $\|f\|_{L^p}\leq\|f\|^{1-1/p}_{L^\infty}\|f\|_{L^1}^{1/p}$, we find
\be
\|\tilde{J}-J^X\|_{L^p((\mathcal{X}_{k_2}^\varepsilon)_1)}\leq Ct^{-1/2-1/2p}\ln t,\quad t>3,~\xi\in\mathcal{I}.
\ee
The norms on $(\mathcal{X}_{k_2}^\varepsilon)_j$, $j=2,3,4$, are estimated in a similar way. Therefore, \eqref{3.49} follows.

If $k\in\partial D_\varepsilon(k_2)$, the variable $z=\sqrt{8t(\alpha+6\beta k_2)}(k-k_2)$ tends to infinity as $t\rightarrow\infty$. It follows from \eqref{3.43} that
\berr
M^X(q,z)=I+\frac{M^X_1(\xi)}{\sqrt{8t(\alpha+6\beta k_2)}(k-k_2)}+O\bigg(\frac{q}{t}\bigg),\quad t\rightarrow\infty,~k\in \partial D_\varepsilon(k_2),
\eerr
where $M^X_1(\xi)$ is defined by \eqref{3.52}. Since
 $$M^{(k_2)}(x,t;k)=(\delta_{k_2}^0)^{\hat{\sigma}_3}M^X(q,z),$$
thus we have
\be\label{3.54}
(M^{(k_2)})^{-1}(x,t;k)-I=-\frac{(\delta_{k_2}^0)^{\hat{\sigma}_3}M^X_1(\xi)}
{\sqrt{8t(\alpha+6\beta k_2)}(k-k_2)}+O\bigg(\frac{q}{t}\bigg),\quad t\rightarrow\infty,~k\in \partial D_\varepsilon(k_2).
\ee
The estimate \eqref{3.50} immediately follows from \eqref{3.54} and $|M_1^X|\leq C$. By Cauchy's formula and \eqref{3.54}, we derive \eqref{3.51}.
\end{proof}

Accordingly, we have the following lemma for the function $M^{(k_1)}(x,t;k)$.
\begin{lemma}\label{lem34}
For each $t>0$ and $\xi\in\mathcal{I}$, the function $M^{(k_1)}(x,t;k)$ defined in \eqref{0.6} is an analytic function of $k\in D_\varepsilon(k_1)\setminus\mathcal{X}_{k_1}^\varepsilon$. Furthermore,
\be\label{0.7}
|M^{(k_1)}(x,t;k)-I|\leq C,\quad t>3,~\xi\in\mathcal{I},~k\in\overline{ D_\varepsilon(k_1)}\setminus\mathcal{X}_{k_1}^\varepsilon.
\ee
Across $\mathcal{X}_{k_1}^\varepsilon$, $M^{(k_1)}$ satisfied the jump condition $M_+^{(k_1)}=M_-^{(k_1)}J^{(k_1)}$ with jump matrix $$J^{(k_1)}=(\delta_{k_1}^0)^{\hat{\sigma}_3}J^Y,$$ and $J^{(k_1)}$ satisfies the following estimates:
\be\label{0.8}
\|J^{(3)}-J^{(k_1)}\|_{L^p(\mathcal{X}_{k_1}^\varepsilon)}\leq Ct^{-1/2-1/2p}\ln t,
\quad t>3,~\xi\in\mathcal{I},
\ee
for $1\leq p\leq\infty$, where $C>0$ is a constant independent of $t,\xi,k$. Moreover, as $t\rightarrow\infty$,
\be\label{0.9}
\|(M^{(k_1)})^{-1}(x,t;k)-I\|_{L^\infty(\partial D_\varepsilon(k_1))}=O(t^{-1/2}),
\ee
and
\be\label{0.10}
\frac{1}{2\pi\ii}\int_{\partial D_\varepsilon(k_1)}((M^{(k_1)})^{-1}(x,t;k)-I)\dd k=-\frac{(\delta_{k_1}^0)^{\hat{\sigma}_3}M^Y_1(\xi)}
{\sqrt{-8t(\alpha+6\beta k_1)}}+O(t^{-1}),
\ee
where $M^Y_1(\xi)$ is given by
\be\label{0.11}
M^Y_1(\xi)=-\ii\begin{pmatrix}
0 ~& \beta^Y(p)\\[4pt]
\overline{\beta^Y(p)} ~& 0
\end{pmatrix}.
\ee
\end{lemma}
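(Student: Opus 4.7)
The plan is to mirror the proof of Lemma~\ref{lem3} step by step, replacing the data at $k_2$ by the corresponding data at $k_1$ and the local model $M^X(q,z)$ by $M^Y(p,z)$. The symmetry $M^Y(p,z)=\sigma_3\overline{M^X(\bar p,-\bar z)}\sigma_3$ means every estimate available for $M^X$ is inherited by $M^Y$, so no new parametrix analysis is required.

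First I would establish the analyticity and the uniform bound \eqref{0.7}. The sectional analyticity of $M^{(k_1)}(x,t;k)$ on $D_\varepsilon(k_1)\setminus\mathcal X_{k_1}^\varepsilon$ is immediate from \eqref{0.6} and the analyticity of $M^Y$ off the cross. For the $L^\infty$ bound, I would verify that $|\delta_{k_1}^0(z)|=1$: in \eqref{0.1} one has $-8tk_1^2(\alpha+6\beta k_1)>0$ for $\xi\in\mathcal I$, so the first factor has purely imaginary exponent against a positive real base; the quantity $\chi_1(k_1)$ is purely imaginary because in \eqref{3.36} the logarithm is real and $1/(2\pi\ii)$ times a real integral is purely imaginary; and $2\ii k_1^2 t(\alpha+4\beta k_1)$ is purely imaginary. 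Thus conjugation of $M^Y$ by $(\delta_{k_1}^0)^{\sigma_3}$ preserves moduli, and applying the $M^X$ bound \eqref{3.46} through the symmetry gives \eqref{0.7}.

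Next I would prove the jump estimate \eqref{0.8}. On each arm $(\mathcal X_{k_1}^\varepsilon)_j$ I write
\[
J^{(3)}-J^{(k_1)}=(\delta_{k_1}^0)^{\hat\sigma_3}(\check J-J^Y),
\]
and since $|\delta_{k_1}^0|=1$ the conjugation is an isometry on every $L^p$. On each arm $\check J-J^Y$ is strictly triangular with a single entry obtained by subtracting one of $r_{1,a}+h$ (or $r_{1,a}+h_a$) from $p=r(k_1)$, and $\delta_{k_1}^1$ from its limit $\e^{\ii z^2/4}(-z)^{\ii\nu(p)}$. Taylor expanding each factor in $k-k_1=z/\sqrt{-8t(\alpha+6\beta k_1)}$ and controlling the remainders exactly as in the derivation of \eqref{3.53} yields
\[
\|\check J-J^Y\|_{L^\infty((\mathcal X_{k_1}^\varepsilon)_j)}\leq C|\e^{\ii\gamma z^2/2}|\,t^{-1/2}\ln t,\qquad 0<\gamma<\tfrac12,
\]
and hence $L^1$ norm of order $t^{-1}\ln t$; the log-convexity inequality $\|f\|_{L^p}\leq\|f\|_{L^\infty}^{1-1/p}\|f\|_{L^1}^{1/p}$ then gives \eqref{0.8} for all $p\in[1,\infty]$.

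Finally, for \eqref{0.9} and \eqref{0.10} I would use the large-$z$ expansion
\[
M^Y(p,z)=I-\frac{\ii}{z}\begin{pmatrix}0 & \beta^Y(p)\\ \overline{\beta^Y(p)} & 0\end{pmatrix}+O\!\Bigl(\frac{p}{z^2}\Bigr),
\]
valid uniformly on $\partial D_\varepsilon(k_1)$ where $|z|=\sqrt{-8t(\alpha+6\beta k_1)}\,\varepsilon=O(\sqrt t)$. Conjugating by $(\delta_{k_1}^0)^{\hat\sigma_3}$ and inverting gives \eqref{0.9} directly, while Cauchy's theorem applied to the $1/z$ term (noting that $\partial D_\varepsilon(k_1)$ is positively oriented and that $z=\sqrt{-8t(\alpha+6\beta k_1)}(k-k_1)$) picks out exactly the claimed residue $-(\delta_{k_1}^0)^{\hat\sigma_3}M^Y_1(\xi)/\sqrt{-8t(\alpha+6\beta k_1)}$ with an $O(t^{-1})$ error coming from the $O(p/z^2)$ remainder. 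The main obstacle is bookkeeping rather than analysis: one must carry the factors $-8t(\alpha+6\beta k_1)$, the branch of $(-z)^{\ii\nu(p)}$ in $J^Y$, and the conjugation by $(\delta_{k_1}^0)^{\sigma_3}$ through consistently so that the final constants in \eqref{0.10} and \eqref{0.11} match those advertised; the symmetry $M^Y=\sigma_3\overline{M^X(\bar p,-\bar z)}\sigma_3$ is what guarantees that after all signs are tracked the result takes the same structural form as in Lemma~\ref{lem3}.
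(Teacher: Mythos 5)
Your proposal is correct and follows exactly the route the paper intends: the paper states Lemma \ref{lem34} without proof, as the direct analogue of Lemma \ref{lem3}, and your argument is precisely that analogy carried out — the verification that $|\delta_{k_1}^0|=1$ (using $\alpha+6\beta k_1<0$ so the base $-8tk_1^2(\alpha+6\beta k_1)$ is positive), the decomposition $J^{(3)}-J^{(k_1)}=(\delta_{k_1}^0)^{\hat\sigma_3}(\check J-J^Y)$ with the Deift--Zhou-type $L^\infty$/$L^1$ estimates and interpolation, and the large-$z$ expansion of $M^Y$ plus Cauchy's formula on $\partial D_\varepsilon(k_1)$, with the symmetry $M^Y(p,z)=\sigma_3\overline{M^X(\bar p,-\bar z)}\sigma_3$ transferring all bounds from $M^X$. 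No gaps.
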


\section{Derivation of the long-time asymptotic formula and error bound}
\setcounter{equation}{0}
\setcounter{lemma}{0}
\setcounter{theorem}{0}

We now begin to establish the explicit long-time asymptotic formula for the Hirota equation \eqref{1.1} on the half-line.

Define the approximate solution $M^{(app)}(x,t;k)$ by
\be\label{4.58}
M^{(app)}=\left\{\begin{aligned}
&M^{(k_1)},\quad k\in D_\varepsilon(k_1),\\
&M^{(k_2)},\quad k\in D_\varepsilon(k_2),\\
&I,\qquad ~~~{\text elsewhere}.
\end{aligned}
\right.
\ee
Let $\hat{M}(x,t;k)$ be
\be\label{4.58'}
\hat{M}=M^{(3)}(M^{(app)})^{-1},
\ee
 then $\hat{M}(x,t;k)$ satisfies the following RH problem
\be\label{4.59}
\hat{M}_+(x,t;k)=\hat{M}_-(x,t;k)\hat{J}(x,t,k),\quad k\in\hat{\Sigma},
\ee
where the jump contour $\hat{\Sigma}=\Sigma^{(3)}\cup\partial D_\varepsilon(k_1)\cup\partial D_\varepsilon(k_2)$ is depicted in Fig. \ref{fig7}, and the jump matrix $\hat{J}(x,t,k)$ is given by
\be\label{4.60}
\hat{J}=\left\{
\begin{aligned}
&M^{(app)}_-J^{(3)}(M^{(app)}_+)^{-1},~~ k\in\hat{\Sigma}\cap (D_\varepsilon(k_1)\cup D_\varepsilon(k_2)),\\
&(M^{(app)})^{-1},\qquad\qquad\quad k\in(\partial D_\varepsilon(k_1)\cup\partial D_\varepsilon(k_2)),\\
&J^{(3)},\qquad\qquad\qquad\quad~~ k\in\hat{\Sigma}\setminus (\overline{D_\varepsilon(k_1)}\cup\overline{D_\varepsilon(k_2)}).
\end{aligned}
\right.
\ee

\begin{figure}[htbp]
\begin{minipage}[t]{0.3\linewidth}
\centering
\includegraphics[width=4in]{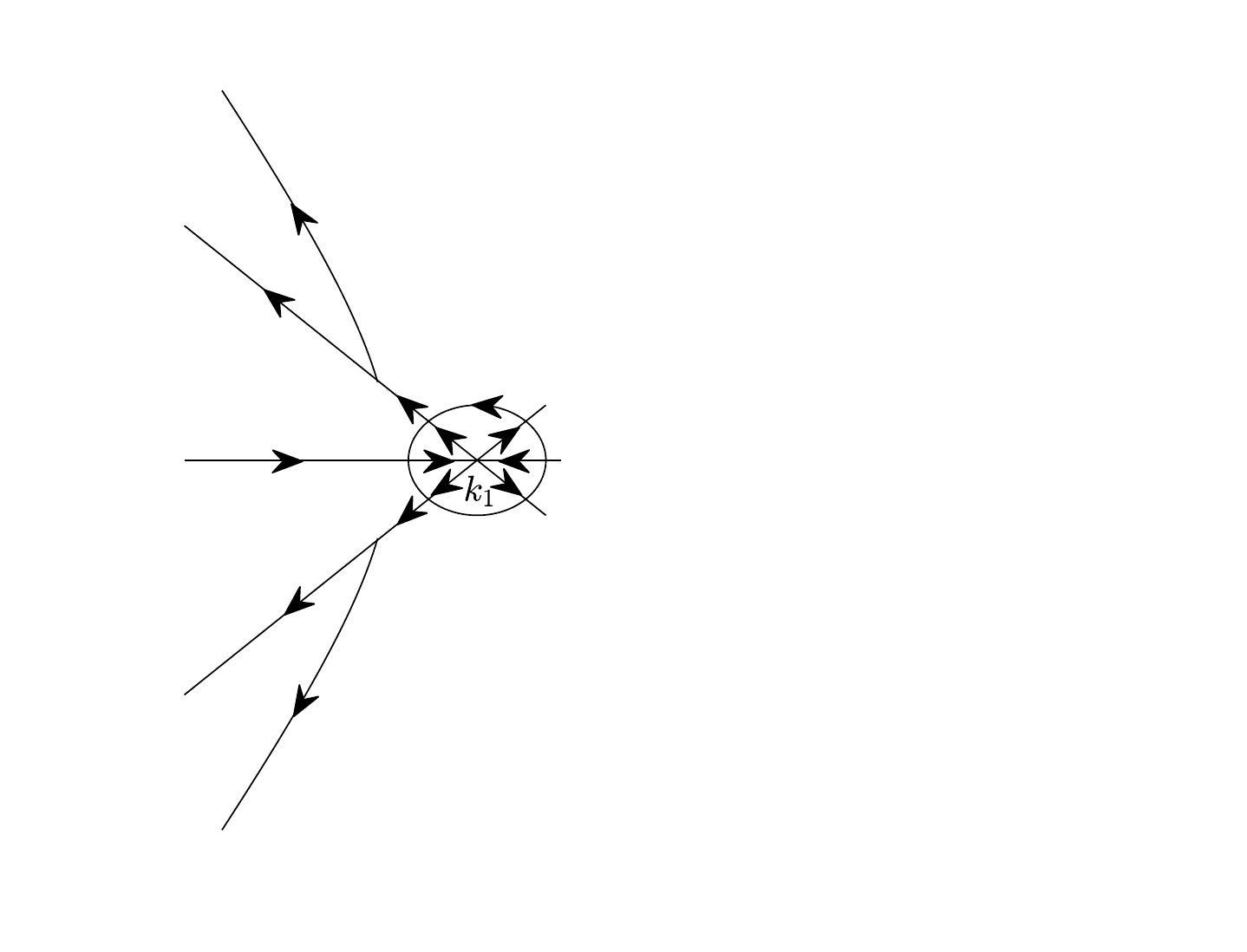}
\label{fig:side:a}
\end{minipage}%
\begin{minipage}[t]{0.1\linewidth}
\centering
\includegraphics[width=4in]{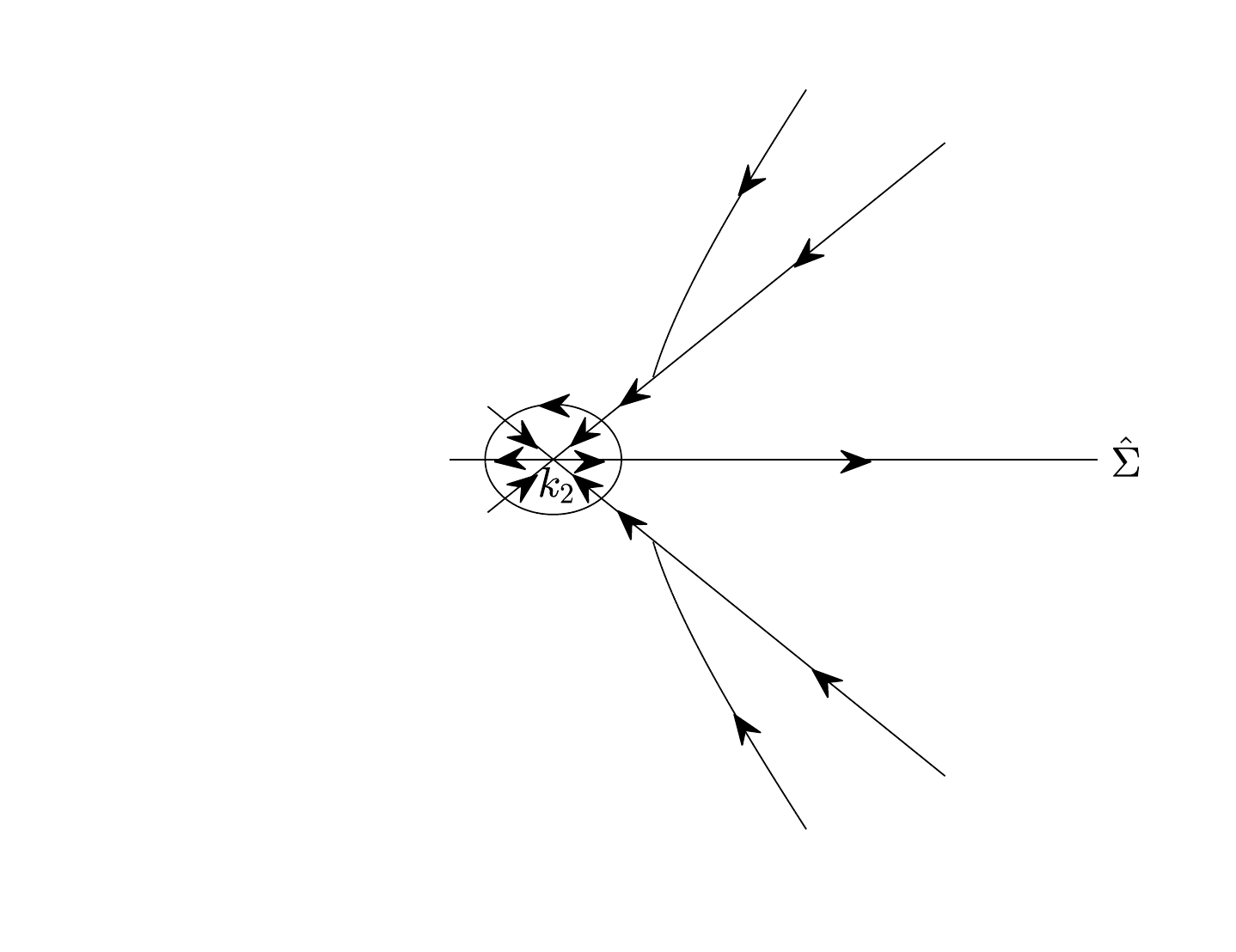}
\label{fig:side:b}
\end{minipage}
\caption{The contour $\hat{\Sigma}$. We separate the enlarged circles $D_\varepsilon(k_1)$ and $D_\varepsilon(k_2)$ to show detail.}\label{fig7}
\end{figure}

For convenience, we rewrite $\hat{\Sigma}$ as follows:
$$\hat{\Sigma}=(\partial D_\varepsilon(k_1)\cup\partial D_\varepsilon(k_2))\cup(\mathcal{X}_{k_1}^\varepsilon\cup\mathcal{X}_{k_2}^\varepsilon)\cup\hat{\Sigma}_1\cup\hat{\Sigma}_2,$$
where
$$\hat{\Sigma}_1=\bigcup_1^6\Sigma^{(3)}_j\setminus(D_\varepsilon(k_1)\cup D_\varepsilon(k_2)),~\hat{\Sigma}_2=\bigcup_7^{10}\Sigma^{(3)}_j,$$
and $\{\Sigma^{(3)}_j\}_1^{10}$ denoting the restriction of $\Sigma^{(3)}$ to the contour labeled by $j$ in Fig. \ref{fig4}.
 Then we have the following lemma if we let $\hat{w}=\hat{J}-I$.
\begin{lemma}\label{lem4}
For $1\leq p\leq\infty$, the following estimates hold for $t>3$ and $\xi\in\mathcal{I}$,
\bea
\|\hat{w}\|_{L^p(\partial D_\varepsilon(k_1)\cup\partial D_\varepsilon(k_2))}&\leq& Ct^{-1/2},\label{4.64}\\
\|\hat{w}\|_{L^p(\mathcal{X}_{k_1}^\varepsilon\cup\mathcal{X}_{k_2}^\varepsilon)}&\leq&Ct^{-\frac{1}{2}-\frac{1}{2p}}\ln t,\label{4.62}\\
\|\hat{w}\|_{L^p(\hat{\Sigma}_1)}&\leq&C\e^{-ct},\label{4.61}\\
\|\hat{w}\|_{L^p(\hat{\Sigma}_2)}&\leq&Ct^{-3/2}.\label{4.63}
\eea
\end{lemma}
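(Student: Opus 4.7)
The plan is to treat each of the four pieces of $\hat{\Sigma}$ separately and combine estimates already derived in Lemmas \ref{lem1}--\ref{lem2} and \ref{lem3}--\ref{lem34} with the signature properties of $\Phi$. The formula \eqref{4.60} shows that $\hat{w}=\hat{J}-I$ takes a different explicit form on each piece, and in every case $\hat{w}$ is built out of quantities whose decay has essentially already been quantified; the task is therefore one of bookkeeping plus converting $L^\infty$ information into $L^p$ information via the arc-lengths involved.

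For the boundary circles, on $\partial D_\varepsilon(k_j)$ one has $\hat{w}=(M^{(k_j)})^{-1}-I$, which by \eqref{3.50} and \eqref{0.9} is $O(t^{-1/2})$ uniformly in $k$; since each circle has fixed finite length, this gives \eqref{4.64} for every $p\in[1,\infty]$. For the cross pieces $\mathcal{X}_{k_j}^\varepsilon$ one uses the jump relation $M^{(k_j)}_+=M^{(k_j)}_-J^{(k_j)}$ to rewrite
\[
\hat{w}=M^{(app)}_-\bigl(J^{(3)}-J^{(k_j)}\bigr)(M^{(app)}_+)^{-1},
\]
and then appeals to the uniform bound $|M^{(k_j)}|\leq C$ from \eqref{3.48}, \eqref{0.7} together with the $L^p$ estimates \eqref{3.49} and \eqref{0.8}; the result is exactly \eqref{4.62}.

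For $\hat{\Sigma}_1$, which consists of the pieces of $\Sigma^{(3)}_1,\dots,\Sigma^{(3)}_6$ lying outside the disks $D_\varepsilon(k_j)$, the entries of $\hat{w}=J^{(3)}-I$ are of the form $(r_{1,a}+h_a)\delta^{-2}\e^{t\Phi}$, $r_{2,a}\delta^2\e^{-t\Phi}$, their complex conjugates, or mixed combinations with $h$ in place of $h_a$. Since the contour was deformed so that $\operatorname{Re}\Phi$ has the appropriate sign on each piece, and since the growth bounds \eqref{3.12} and \eqref{3.30} are of the shape $\tfrac{C}{1+|k|^2}\e^{\frac{t}{4}|\operatorname{Re}\Phi|}$, the product has modulus bounded by $\tfrac{C}{1+|k|^2}\e^{-\frac{3t}{4}|\operatorname{Re}\Phi(k)|}$. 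Away from $k_1$ and $k_2$ one has $|\operatorname{Re}\Phi(k)|\geq c(\varepsilon)>0$ uniformly on $\hat{\Sigma}_1$, so each factor decays like $\e^{-c t}$, giving the exponential bound \eqref{4.61} in every $L^p$; for the terms containing $h$ (not $h_a$), one uses in addition that $h$ is smooth and bounded on $\bar{D}_2$ so only the exponential matters. Finally, on $\hat{\Sigma}_2$ the jump matrix involves the remainders $h_r$, $r_{1,r}$, $r_{2,r}$ times exponentials with non-positive real part (as a consequence of $|\delta|$ being controlled and of the contour orientation), so $|\hat{w}|$ is pointwise bounded by $|h_r|+|r_{1,r}|+|r_{2,r}|$ up to a constant, and the $L^p$-bounds in Lemma \ref{lem1}(3) and Lemma \ref{lem2}(3)--(4) yield \eqref{4.63}.

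The main obstacle is the cross estimate \eqref{4.62}, specifically reducing it to the pre-packaged bounds \eqref{3.49}, \eqref{0.8}: one must verify that the product $M^{(app)}_-\,\cdot\,(M^{(app)}_+)^{-1}$ is genuinely uniformly bounded on $\mathcal{X}_{k_j}^\varepsilon$ (not just on the boundary of the disks), which requires the uniform estimate \eqref{3.48}/\eqref{0.7} together with the fact that $(\delta_{k_j}^0)^{\sigma_3}$ is a unitary conjugation since $|\delta_{k_j}^0|=1$. Once this uniformity is in hand the rest is algebra: the logarithmic factor in \eqref{4.62} originates from the $\ln t$ already present in the bounds \eqref{3.49} and \eqref{0.8}, and the interpolation between $p=1$ and $p=\infty$ produces the exponent $-\tfrac12-\tfrac{1}{2p}$.
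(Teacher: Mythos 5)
Your proposal is correct and follows essentially the same route as the paper: the circle estimate from \eqref{3.50} and \eqref{0.9}, the cross estimate via the conjugation $\hat{w}=M^{(k_j)}_-(J^{(3)}-J^{(k_j)})(M^{(k_j)}_+)^{-1}$ together with \eqref{3.49} and \eqref{0.8}, the exponential bound on $\hat{\Sigma}_1$ from the sign of $\mathrm{Re}\,\Phi$ combined with \eqref{3.12} and \eqref{3.30}, and the $O(t^{-3/2})$ bound on $\hat{\Sigma}_2$ from the remainder estimates of Lemmas \ref{lem1} and \ref{lem2}. The only cosmetic difference is that the paper parametrizes one ray of $\hat{\Sigma}_1$ explicitly to exhibit $\mathrm{Re}\,\Phi$, whereas you argue the lower bound $|\mathrm{Re}\,\Phi|\geq c(\varepsilon)$ directly; both are fine.
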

\begin{proof}
The inequality \eqref{4.64} is a consequence of \eqref{3.50}, \eqref{0.9} and \eqref{4.60}.

For $k\in\mathcal{X}_{k_j}^\varepsilon$, we find $$\hat{w}=M^{(k_j)}_-(J^{(3)}-J^{(k_j)})(M^{(k_j)}_+)^{-1},\quad j=1,2.$$ Therefore, it follows from \eqref{3.49} and \eqref{0.8} that the estimate \eqref{4.62} holds.

For $k\in D_1\cap\hat{\Sigma}_1$, let $k=k_2+u\e^{\frac{\ii\pi}{4}}$, $\varepsilon<u<\infty$, then
\berr
\text{Re}\Phi(k)=-4u^2(\sqrt{2}\beta u+\sqrt{\alpha^2-3\beta\xi}).
\eerr
Since $\hat{w}$ only has a nonzero $-(r_{1,a}+h)\delta^{-2}\e^{t\Phi}$ in $(21)$ entry, hence, for $t\geq1$, by \eqref{3.12} and \eqref{3.30}, we get
\bea
|\hat{w}_{21}|&=&|-(r_{1,a}+h)\delta^{-2}\e^{t\Phi}|\nn\\
&\leq& C|r_{1,a}+h_a|\e^{-t|\text{Re}\Phi|}\nn\\
&\leq&\frac{C}{1+|k|^2}\e^{-\frac{3t}{4}|\text{Re}\Phi|}\leq  C\e^{-c\varepsilon^2t}.\nn
\eea
In a similar way, the other estimates on $\hat{\Sigma}_1$ hold.
This proves \eqref{4.61}.

Since the matrix $\hat{w}$ on $\hat{\Sigma}_2$ only involves the small remainders $h_r$, $r_{1,r}$ and $r_{2,r}$, thus, by Lemmas \ref{lem1} and \ref{lem2}, the estimate \eqref{4.63} follows.
\end{proof}
The estimates in Lemma \ref{lem4} imply that
\be\label{5.9}
\begin{aligned}
\|\hat{w}\|_{(L^1\cap L^2)(\hat{\Sigma})}&\leq Ct^{-1/2},\\
\|\hat{w}\|_{L^\infty(\hat{\Sigma})}&\leq Ct^{-1/2}\ln t,
\end{aligned}
\quad t>3,~ \xi\in\mathcal{I}.
\ee
Let $\hat{C}$ denote the Cauchy operator associated with $\hat{\Sigma}$:
\berr
(\hat{C}f)(k)=\int_{\hat{\Sigma}}\frac{f(\zeta)}{\zeta-k}\frac{\dd \zeta}{2\pi\ii},\quad k\in\bfC\setminus\hat{\Sigma},~f\in L^2(\hat{\Sigma}).
\eerr
We denote the boundary values of $\hat{C}f$ from the left and right sides of $\hat{\Sigma}$ by $\hat{C}_+f$ and $\hat{C}_-f$, respectively. As is well known, the operators $\hat{C}_\pm$ are bounded from $L^2(\hat{\Sigma})$ to $L^2(\hat{\Sigma})$, and $\hat{C}_+-\hat{C}_-=I$, here $I$ denotes the identity operator.

Define the operator $\hat{C}_{\hat{w}}$: $L^2(\hat{\Sigma})+L^\infty(\hat{\Sigma})\rightarrow L^2(\hat{\Sigma})$ by $\hat{C}_{\hat{w}}f=\hat{C}_-(f\hat{w}),$ that is, $\hat{C}_{\hat{w}}$ is defined by $\hat{C}_{\hat{w}}(f)=\hat{C}_+(f\hat{w}_-)+\hat{C}_-(f\hat{w}_+)$ where we have chosen, for simplicity, $\hat{w}_+=\hat{w}$ and $\hat{w}_-=0$. Then, by \eqref{5.9}, we find
\be\label{4.66}
\|\hat{C}_{\hat{w}}\|_{B(L^2(\hat{\Sigma}))}\leq C\|\hat{w}\|_{L^\infty(\hat{\Sigma})}\leq Ct^{-1/2}\ln t,
\ee
where $B(L^2(\hat{\Sigma}))$ denotes the Banach space of bounded linear operators $L^2(\hat{\Sigma})\rightarrow L^2(\hat{\Sigma})$. Therefore, there exists a $T>0$ such that $I-\hat{C}_{\hat{w}}\in B(L^2(\hat{\Sigma}))$ is invertible for all $\xi\in\mathcal{I},$ $t>T$. Following this, we may define the $2\times2$ matrix-valued function $\hat{\mu}(x,t;k)$ whenever $t>T$ by
\be\label{4.67}
\hat{\mu}=I+\hat{C}_{\hat{w}}\hat{\mu}.
\ee
Then
\be\label{4.68}
\hat{M}(x,t;k)=I+\frac{1}{2\pi\ii}\int_{\hat{\Sigma}}\frac{(\hat{\mu}\hat{w})(x,t;\zeta)}{\zeta-k}\dd\zeta,\quad k\in\bfC\setminus\hat{\Sigma}
\ee
is the unique solution of the RH problem \eqref{4.59} for $t>T$.

Moreover, the function $\hat{\mu}(x,t;k)$ satisfies
\be\label{4.69}
\|\hat{\mu}(x,t;\cdot)-I\|_{L^2(\hat{\Sigma})}=O(t^{-1/2}),\quad t\rightarrow\infty,~\xi\in\mathcal{I}.
\ee
In fact, equation \eqref{4.67} is equivalent to $\hat{\mu}=I+(I-\hat{C}_{\hat{w}})^{-1}\hat{C}_{\hat{w}}I$. Using the Neumann series, we get
$$\|(I-\hat{C}_{\hat{w}})^{-1}\|_{B(L^2(\hat{\Sigma}))}\leq\frac{1}{1-\|\hat{C}_{\hat{w}}\|_{B(L^2(\hat{\Sigma}))}}$$
whenever $\|\hat{C}_{\hat{w}}\|_{B(L^2(\hat{\Sigma}))}<1$. Thus, we find
\bea
\|\hat{\mu}(x,t;\cdot)-I\|_{L^2(\hat{\Sigma})}&=&\|(I-\hat{C}_{\hat{w}})^{-1}\hat{C}_{\hat{w}}I\|_{L^2(\hat{\Sigma})}\nn\\
&\leq&\|(I-\hat{C}_{\hat{w}})^{-1}\|_{B(L^2(\hat{\Sigma}))}\|\hat{C}_-(\hat{w})\|_{L^2(\hat{\Sigma})}\nn\\
&\leq&\frac{C\|\hat{w}\|_{L^2(\hat{\Sigma})}}{1-\|\hat{C}_{\hat{w}}\|_{B(L^2(\hat{\Sigma}))}}\leq C\|\hat{w}\|_{L^2(\hat{\Sigma})}\nn
\eea
for all $t$ large enough and all $\xi\in\mathcal{I}$. In view of \eqref{5.9}, this gives \eqref{4.69}.

It follows from \eqref{4.68} that
\be\label{4.70}
\lim_{k\rightarrow\infty}k(\hat{M}(x,t;k)-I)=-\frac{1}{2\pi\ii}\int_{\hat{\Sigma}}(\hat{\mu}\hat{w})(x,t;k)\dd k.
\ee
Using \eqref{4.61} and \eqref{4.69}, we have
\bea
\int_{\hat{\Sigma}_1}(\hat{\mu}\hat{w})(x,t;k)\dd k&=&\int_{\hat{\Sigma}_1}\hat{w}(x,t;k)\dd k+\int_{\hat{\Sigma}_1}(\hat{\mu}(x,t;k)-I)\hat{w}(x,t;k)\dd k\nn\\
&\leq&\|\hat{w}\|_{L^1(\hat{\Sigma}_1)}+\|\hat{\mu}-I\|_{L^2(\hat{\Sigma}_1)}\|\hat{w}\|_{L^2(\hat{\Sigma}_1)}\nn\\
&\leq&C\e^{-ct},\quad t\rightarrow\infty.\nn
\eea
Similarly, by \eqref{4.62} and \eqref{4.69}, the contribution from $\mathcal{X}_{k_1}^\varepsilon\cup\mathcal{X}_{k_2}^\varepsilon$ to the right-hand side of \eqref{4.70} is $$O(\|\hat{w}\|_{L^1(\mathcal{X}_{k_1}^\varepsilon\cup\mathcal{X}_{k_2}^\varepsilon)}+\|\hat{\mu}-I\|_{L^2(\mathcal{X}_{k_1}^\varepsilon\cup\mathcal{X}_{k_2}^\varepsilon)}
\|\hat{w}\|_{L^2(\mathcal{X}_{k_1}^\varepsilon\cup\mathcal{X}_{k_2}^\varepsilon)})=O(t^{-1}\ln t),\quad t\rightarrow\infty.$$
By \eqref{4.63} and \eqref{4.69}, the contribution from $\hat{\Sigma}_2$ to the right-hand side of \eqref{4.70} is
\berr
O(\|\hat{w}\|_{L^1(\hat{\Sigma}_2)}+\|\hat{\mu}-I\|_{L^2(\hat{\Sigma}_2)}
\|\hat{w}\|_{L^2(\hat{\Sigma}_2)})=O(t^{-3/2}),\quad t\rightarrow\infty.
\eerr
Finally, by \eqref{3.51}, \eqref{0.10}, \eqref{4.64} and \eqref{4.69}, we can get
\bea
&&-\frac{1}{2\pi\ii}\int_{\partial D_\varepsilon(k_1)\cup\partial D_\varepsilon(k_2)}(\hat{\mu}\hat{w})(x,t;k)\dd k\nn\\
&=&-\frac{1}{2\pi\ii}\int_{\partial D_\varepsilon(k_1)\cup\partial D_\varepsilon(k_2)}\hat{w}(x,t;k)\dd k-\frac{1}{2\pi\ii}\int_{\partial D_\varepsilon(k_1)\cup\partial D_\varepsilon(k_2)}(\hat{\mu}(x,t;k)-I)\hat{w}(x,t;k)\dd k\nn\\
&=&-\frac{1}{2\pi\ii}\int_{\partial D_\varepsilon(k_1)}\bigg((M^{(k_1)})^{-1}(x,t;k)-I\bigg)\dd k-\frac{1}{2\pi\ii}\int_{\partial D_\varepsilon(k_2)}\bigg((M^{(k_2)})^{-1}(x,t;k)-I\bigg)\dd k\nn\\
&&+O(\|\hat{\mu}-I\|_{L^2(\partial D_\varepsilon(k_1)\cup\partial D_\varepsilon(k_2))}
\|\hat{w}\|_{L^2(\partial D_\varepsilon(k_1)\cup\partial D_\varepsilon(k_2))})\nn\\
&=&\frac{(\delta_{k_1}^0)^{\hat{\sigma}_3}M^Y_1(\xi)}
{\sqrt{-8t(\alpha+6\beta k_1)}}+\frac{(\delta_{k_2}^0)^{\hat{\sigma}_3}M^X_1(\xi)}
{\sqrt{8t(\alpha+6\beta k_2)}}+O(t^{-1}),\quad t\rightarrow\infty.\nn
\eea
Thus, we obtain the following important relation
\be\label{4.71}
\lim_{k\rightarrow\infty}k(\hat{M}(x,t;k)-I)=\frac{(\delta_{k_1}^0)^{\hat{\sigma}_3}M^Y_1(\xi)}
{\sqrt{-8t(\alpha+6\beta k_1)}}+\frac{(\delta_{k_2}^0)^{\hat{\sigma}_3}M^X_1(\xi)}
{\sqrt{8t(\alpha+6\beta k_2)}}+O(t^{-1}\ln t),\quad t\rightarrow\infty.
\ee

Taking into account that \eqref{2.19}, \eqref{3.5}, \eqref{3.7}, \eqref{3.9}, \eqref{3.31}, \eqref{3.32} and \eqref{4.58'}, for sufficient large $k\in\bfC\setminus\hat{\Sigma}$, we get
\bea\label{4.72}
u(x,t)&=&2\ii\lim_{k\rightarrow\infty}(kM(x,t;k))_{12}\nn\\
&=&2\ii\lim_{k\rightarrow\infty}k(\hat{M}(x,t;k)-I)_{12}\nn\\
&=&2\ii\bigg(\frac{-\ii\beta^Y(p)(\delta_{k_1}^0)^{2}}
{\sqrt{-8t(\alpha+6\beta k_1)}}+\frac{-\ii\beta^X(q)(\delta_{k_2}^0)^2}{\sqrt{8t(\alpha+6\beta k_2)}}\bigg)+O\bigg(\frac{\ln t}{t}\bigg).\nn
\eea

Collecting the above computations, we obtain our main results stated as the following theorem.
\begin{theorem}\label{the4.2}
Let $u_0(x),g_0(t),g_1(t),g_2(t)$ lie in the Schwartz space $S([0,\infty))$. Suppose the assumption \ref{ass1} be valid. Then, for any positive constant $N$, as $t\rightarrow\infty$, the solution $u(x,t)$ of the IBV problem for Hirota equation \eqref{1.1} on the half-line satisfies the following asymptotic formula
\be
u(x,t)=\frac{u_{as}(x,t)}{\sqrt{t}}+O\bigg(\frac{\ln t}{t}\bigg),\quad t\rightarrow\infty,~\xi=\frac{x}{t}\in\mathcal{I},
\ee
where the error term is uniform with respect to $x$ in the given range, and the leading-order
coefficient $u_{as}(x,t)$ is defined by
\be
u_{as}(x,t)=\sqrt{\frac{\nu(k_1)}{-2(\alpha+6\beta k_1)}}\e^{\ii\phi_a(\xi,t)}+\sqrt{\frac{\nu(k_2)}{2(\alpha+6\beta k_2)}}\e^{\ii\phi_b(\xi,t)},
\ee
where
\bea
\phi_a(\xi,t)&=&-\frac{\pi}{4}-\arg r(k_1)+\arg\Gamma(\ii\nu(k_1))-\nu(k_1)\ln(-8tk_1^2(\alpha+6\beta k_1))+4k_1^2t(\alpha+4\beta k_1)\nn\\
&&-\frac{1}{\pi}\int_{k_1}^{k_2}\ln\bigg(\frac{1+|r(s)|^2}{1+|r(k_1)|^2}\bigg)\frac{\dd s}{s-k_1},\nn\\
\phi_b(\xi,t)&=&\frac{\pi}{4}-\arg r(k_2)-\arg\Gamma(\ii\nu(k_2))+\nu(k_2)\ln(8tk_1^2(\alpha+6\beta k_2))+4k_2^2t(\alpha+4\beta k_2)\nn\\
&&-\frac{1}{\pi}\int_{k_1}^{k_2}\ln\bigg(\frac{1+|r(s)|^2}{1+|r(k_2)|^2}\bigg)\frac{\dd s}{s-k_2},\nn
\eea
and $k_1,~k_2$, $\nu(k_1),~\nu(k_2)$ are defined by \eqref{3.3}, \eqref{3.4}, \eqref{3.35} and \eqref{3.35'}, respectively.
\end{theorem}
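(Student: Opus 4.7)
The plan is to extract the leading asymptotics of $u(x,t)$ directly from the chain of RH problem transformations $M \to M^{(1)} \to M^{(2)} \to M^{(3)} \to \hat{M}$ together with the local parametrices constructed in Section 4. Since \eqref{3.5} and \eqref{3.7} reduce to the identity in a neighborhood of $k = \infty$ and $\delta(k) = 1 + O(k^{-1})$, the reconstruction formula \eqref{2.19} yields
\berr
u(x,t) = 2\ii \lim_{k\to\infty} k\bigl(M^{(3)}(x,t;k) - I\bigr)_{12} = 2\ii \lim_{k\to\infty} k\bigl(\hat{M}(x,t;k) - I\bigr)_{12},
\eerr
using that $M^{(app)} = I$ outside of $D_\varepsilon(k_1) \cup D_\varepsilon(k_2)$. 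Thus the entire task is to compute the large-$k$ expansion of $\hat{M}$.

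Next I would set up small-norm theory for the RH problem \eqref{4.59}. Lemma \ref{lem4} gives $\|\hat{w}\|_{L^\infty(\hat{\Sigma})} = O(t^{-1/2}\ln t)$ and $\|\hat{w}\|_{L^2(\hat{\Sigma})} = O(t^{-1/2})$, so the Beals--Coifman operator $I - \hat{C}_{\hat{w}}$ is invertible on $L^2(\hat{\Sigma})$ for $t$ large, yielding a unique solution $\hat{\mu} = I + (I-\hat{C}_{\hat{w}})^{-1}\hat{C}_{\hat{w}} I$ with $\|\hat{\mu}-I\|_{L^2(\hat{\Sigma})} = O(t^{-1/2})$. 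By the standard Cauchy integral representation \eqref{4.68},
\berr
\lim_{k\to\infty} k\bigl(\hat{M}(x,t;k) - I\bigr) = -\frac{1}{2\pi\ii}\int_{\hat{\Sigma}} (\hat{\mu}\hat{w})(x,t;\zeta)\, \dd\zeta.
\eerr
I would now split $\hat{\Sigma}$ into the four pieces $\partial D_\varepsilon(k_1)\cup\partial D_\varepsilon(k_2)$, $\mathcal{X}_{k_1}^\varepsilon\cup\mathcal{X}_{k_2}^\varepsilon$, $\hat{\Sigma}_1$ and $\hat{\Sigma}_2$ and use H\"older's inequality together with Lemma \ref{lem4} and the $L^2$-bound on $\hat{\mu}-I$ to show that the contributions from $\hat{\Sigma}_1, \hat{\Sigma}_2$, and the crosses $\mathcal{X}_{k_j}^\varepsilon$ are $O(\e^{-ct})$, $O(t^{-3/2})$, and $O(t^{-1}\ln t)$, respectively; only the two small circles give a contribution of size $O(t^{-1/2})$.

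The leading contribution is then computed from the circles using \eqref{3.51} and \eqref{0.10}: replacing $\hat{\mu}\hat{w}$ by $\hat{w} = (M^{(app)})^{-1} - I$ up to error terms, and applying the residue-type formulas of Lemmas \ref{lem3} and \ref{lem34}, I obtain
\berr
\lim_{k\to\infty} k\bigl(\hat{M}(x,t;k) - I\bigr) = \frac{(\delta^0_{k_1})^{\hat{\sigma}_3} M^Y_1(\xi)}{\sqrt{-8t(\alpha+6\beta k_1)}} + \frac{(\delta^0_{k_2})^{\hat{\sigma}_3} M^X_1(\xi)}{\sqrt{8t(\alpha+6\beta k_2)}} + O\bigl(t^{-1}\ln t\bigr).
\eerr
Reading off the $(1,2)$-entry and substituting the explicit forms $\beta^X(q) = \sqrt{\nu(q)}\e^{\ii(\pi/4 - \arg q - \arg\Gamma(\ii\nu(q)))}$, $\beta^Y(p) = \sqrt{\nu(p)}\e^{-\ii(\pi/4 + \arg p + \arg\Gamma(-\ii\nu(p)))}$ from Theorem \ref{th4.1} together with the explicit phases of $(\delta^0_{k_1})^2$ and $(\delta^0_{k_2})^2$ from \eqref{0.1} and \eqref{3.37} produces the two-term oscillatory formula with phases $\phi_a$ and $\phi_b$.

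The main obstacle is bookkeeping: one must carefully combine the $\arg$ corrections from $\beta^{X/Y}$, the power-of-$t$ logarithms coming from $(-8tk_1^2(\alpha+6\beta k_1))^{-\ii\nu(k_1)}$ and $(8tk_1^2(\alpha+6\beta k_2))^{\ii\nu(k_2)}$, the stationary-phase terms $4k_j^2 t(\alpha+4\beta k_j)$, and the Cauchy-integral remainders $\chi_j(k_j)$, and verify that all intermediate error estimates are uniform in $\xi \in \mathcal{I}$. A secondary subtlety is confirming that the factor $p = r(k_1)$ (resp.\ $q = r(k_2)$) does not vanish on $\mathcal{I}$, so that $\nu(k_j) > 0$ and the asymptotic formula is nondegenerate; this follows from Assumption \ref{ass1} and the definition of $r$. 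Once these pieces are aligned, the asymptotic formula in the statement follows directly.
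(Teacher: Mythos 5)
Your proposal follows essentially the same route as the paper: the small-norm analysis of the $\hat{M}$-RH problem via the operator $\hat{C}_{\hat{w}}$, the splitting of $\hat{\Sigma}$ into the four pieces with exactly the estimates of Lemma \ref{lem4}, the identification of the circles $\partial D_\varepsilon(k_1)\cup\partial D_\varepsilon(k_2)$ as the sole $O(t^{-1/2})$ contribution via \eqref{3.51} and \eqref{0.10}, and the final substitution of $\beta^X$, $\beta^Y$, $(\delta^0_{k_1})^2$, $(\delta^0_{k_2})^2$ into the reconstruction formula. The argument is correct and matches the paper's derivation of \eqref{4.71} and Theorem \ref{the4.2} step for step.
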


\medskip
\small{

}

\begin{thebibliography}{99}



\bibitem{JL2}
L. K. Arruda, J. Lenells, Long-time asymptotics for the derivative nonlinear Schr\"odinger equation on the half-line,
{\em Nonlinearity} {\bf30}  (2017) 4141--4172.

\bibitem{AB}
A. Boutet de Monvel, A. Its, V. Kotlyarov, Long-time asymptotics for the focusing NLS equation with time-periodic boundary condition on the half-line, {\em Commun. Math. Phys.} {\bf290} (2009)  479--522.

\bibitem{AB3}
A. Boutet de Monvel, D. Shepelsky, A Riemann--Hilbert approach for the Degasperis--Procesi equation, {\em Nonlinearity} {\bf26} (2013)  2081--2107.

\bibitem{AB4}
A. Boutet de Monvel, D. Shepelsky, A. S. Fokas,  The mKDV equation on the half-line. {\em J. Inst. Math. Jussieu} {\bf3} (2004) 139--164.

\bibitem{AB2}
A. Boutet de Monvel, D. Shepelsky, L. Zielinski, The short pulse equation by a Riemann--Hilbert approach, {\em Lett. Math. Phys.} {\bf107} (2017) 1345--1373.

\bibitem{AB1}
A. Boutet de Monvel, D. Shepelsky, L. Zielinski, The short-wave model for the Camassa--Holm equation: a Riemann--Hilbert approach, {\em Inverse Probl.} {\bf27} (2011) 105006.



\bibitem{RB}
R. Buckingham, S. Venakides, Long-time asymptotics of the nonlinear Schr\"odinger equation shock
problem, {\em Commun. Pure Appl. Math.} {\bf60}  (2007) 1349--1414.

\bibitem{PD}
P. Deift, X. Zhou, A steepest descent method for oscillatory Riemann--Hilbert problems. Asymptotics
for the MKdV equation, {\em Ann. Math.} {\bf137} (1993) 295--368.

\bibitem{PD1}
P. Deift, X. Zhou, Long-time behavior of the non-focusing nonlinear Schr\"odinger equation-a case study, {\em Lectures in Mathematical Sciences,  University of Tokyo} {\bf5} Tokyo, (1995).

\bibitem{FD}
F. Demontis, G. Ortenzi, C. van der Mee, Exact solutions of the Hirota equation and vortex filaments motion, {\em Physica D} {\bf313} (2015) 61--80.

\bibitem{F1}
A. S. Fokas, A unified transform method for solving linear and certain nonlinear PDEs,
{\em Proc. R. Soc. Lond. A} {\bf453}  (1997) 1411--1443.

\bibitem{F2}
A. S. Fokas, Integrable nonlinear evolution equations on the half-line, {\em Comm. Math. Phys.} {\bf230}  (2002) 1--39.


\bibitem{F3}
A. S. Fokas, A. R. Its, L.-Y. Sung, The nonlinear Schr\"oinger equation on the half-line,
{\em Nonlinearity} {\bf18}  (2005) 1771--1822.

\bibitem{GL}
B. Guo, N. Liu, The Hirota equation on the half-line by a Riemann--Hilbert approach, submitted.

\bibitem{GL1}
B. Guo, N. Liu, Initial-boundary value problem and long-time asymptotics for the Kundu--Eckhaus equation on the
half-line, preprint, arXiv:1712.03821v1, 2017.

\bibitem{RH}
R. Hirota, Exact envelope-soliton solutions of a nonlinear wave equation, {\em J. Math. Phys.}  {\bf4} (1973) 805--809.

\bibitem{RH1}
R. Hirota, The direct method in soliton theory, Cambridge University Press, Cambridge, 1987.

\bibitem{HL}
L. Huang, J. Xu, E. Fan, Long-time asymptotic for the Hirota equation via nonlinear steepest
descent method, {\em Nonlinear Anal. Real World Appl.} {\bf26} (2015) 229--262.


\bibitem{JL1}
J. Lenells,  The nonlinear steepest descent method: asymptotics for initial-boundary value
problems, {\em SIAM J. Math. Anal.} {\bf48} (2016) 2076--2118.


\bibitem{JL3}
J. Lenells,  The nonlinear steepest descent method for Riemann--Hilbert problems of low
regularity, {\em Indiana Univ. Math. J.} {\bf66}  (2017) 1287--1332.


\bibitem{TH}
Y. Tao, J. He, Multisolitons, breathers, and rogue waves for the Hirota equation generated by
the Darboux transformation, {\em Phys. Rev. E} {\bf85} (2012) 026601.

\bibitem{XJ1}
J. Xu, E. Fan, Long-time asymptotics for the Fokas--Lenells equation with decaying initial value problem: Without solitons, {\em J. Differ. Equations} {\bf259}  (2015) 1098--1148.

\bibitem{XJ}
J. Xu, E. Fan, Y. Chen, Long-time Asymptotic for the derivative nonlinear Schr\"odinger equation with
step-like initial value, {\em Math. Phys., Anal. Geom.} {\bf16} (2013) 253--288.


\bibitem{ZQZ}
Q. Zhu, J. Xu, E. Fan, The Riemann--Hilbert problem and long-time asymptotics for the Kundu--Eckhaus equation with decaying initial value, {\em Appl. Math. Lett.} {\bf76} (2018) 81--89.



\end{thebibliography}
\end{document}